\DeclareMathAlphabet{\mathbbm}{U}{bbm}{m}{n}
\newcounter{mainthm}
\newcounter{mainconj}
\newtheorem{thm}{Theorem}[section]
\theoremstyle{plain}
\newtheorem{prop}[thm]{Proposition}
\newtheorem{defn-thm}[thm]{Definition-Theorem}
\newtheorem{defn-lem}[thm]{Definition-Lemma}
\newtheorem{conjecture}
[thm]{Conjecture}
\newtheorem{question}
[thm]{Question}
\theoremstyle{definition}
\newtheorem{defn}[thm]{Definition}
\newtheoremstyle{rmk}
{5pt}
{5pt}
{}
{}
{\itshape}
{}
{.5em}
{}
\theoremstyle{rmk}
\newtheorem{rmk}[thm]{Remark}
\newtheoremstyle{note}
{8pt}
{5pt}
{\itshape}
{10pt}
{\bfseries}
{}
{.5em}
{}
\theoremstyle{note}
\setlist[description]{font=
	\normalfont
	\itshape
	\space}
\DeclareMathOperator{\dist}{dist}
\DeclareMathOperator{\Spec}{Spec}
\DeclareMathOperator{\trop}{\mathfrak{trop}}
\DeclareMathOperator{\val}{\mathsf{v}}
\DeclareMathOperator{\median}{\mathrm{median}}
\newcommand*{\Scale}[2][4]{\scalebox{#1}{$#2$}}%
\titleformat{\subsection}[runin]{
	\bfseries\itshape\normalsize}{\thesubsection \ }{0em}{}[\mbox{ . } ]
\titleformat{\subsubsection}[runin]{
	\itshape\normalsize}{\thesubsubsection \ }{0em}{}[\mbox{} ]
\footnotesize \vspace{0pt}}%
\begin{document}
	\setlength{\parindent}{15pt}	\setlength{\parskip}{0em}
	
	\title{Family Floer SYZ singularities for the conifold transition}
	\author[Hang Yuan]{Hang Yuan}
\begin{abstract} {\sc Abstract:}  
We show a mathematically precise version of the SYZ conjecture, proposed in the family Floer context, for the conifold with a conjectural mirror relation between smoothing and crepant resolution.
The singular T-duality fibers are explicitly written and exactly correspond to the codimension-2 `missing points' in the mirror cluster variety, which confirms the speculation of Chan, Pomerleano, and Ueda but only in the non-archimedean setting.
Concerning purely the area of Berkovich geometry and forgetting all the mirror symmetry background, our B-side analytic fibration is also a new explicit example of Kontsevich-Soibelman's affinoid torus fibration with singular extension.
\end{abstract}

	\maketitle


{\footnotesize \textbf{MSC Codes}: 14J33, 53D37, 53D40}

{\footnotesize \textbf{Key words}: conifold transition, SYZ conjecture, non-archimedean geometry}

\tableofcontents

%
%

\section{Introduction}

The geometric understanding of mirror symmetry for Calabi-Yau manifolds hinges on the Strominger-Yau-Zaslow (SYZ) conjecture \cite{SYZ}.  This framework has been extended to encompass non-Calabi-Yau settings (c.f. Hori \cite{hori2002mirror}, Auroux \cite{AuTDual}). At its core, the SYZ proposal posits the existence of special Lagrangian torus fibrations on mirror space pairs.  However, the incorporation of quantum corrections to account for wall-crossing phenomena adds complexity to the duality picture.  Indeed, the precise mathematical formulation of the SYZ conjecture, particularly regarding singularities, remained elusive for a significant time due to the limitations of the underlying physics-based notion of T-duality. Thus, the application of the SYZ philosophy and T-duality viewpoint often remained at a heuristic level. Despite these limitations, it is noteworthy that this approach has exhibited surprising connections to successful proofs of Kontsevich's Homological Mirror Symmetry (HMS) \cite{KonICM} for specific examples.

This paper delves deeper into the geometric aspects of SYZ mirror symmetry for the conifold by establishing a mathematically precise formulation of T-duality.  Our work provides a complementary perspective to the categorical HMS results achieved by Chan-Pomerleano-Ueda \cite{Chan_Pomerleano_Ueda} within the SYZ framework. We propose that a quantum-corrected family Floer functor \cite{AboFamilyFaithful} could offer a more conceptually unified explanation.
Our analysis also suggests a noteworthy connection between codimension-two missing points in the mirror cluster variety and the dual SYZ singular fibers. This finding strengthens the speculation of Chan-Pomerleano-Ueda \cite[\S 2.5]{Chan_Pomerleano_Ueda} and emphasizes the potential value of investigating non-archimedean analytification (Section \ref{sss_speculation_Chan}). Our T-duality construction has the potential to shed light on the conjectured mirror symmetry relationship between smoothing and crepant resolution from a more geometric point of view.


\subsection{Main result}
By the \textit{conifold}, we mean the singular algebraic variety
\[
Z=Z(\Bbbk)=\{(u_1,v_1, u_2,v_2)\in\mathbb \Bbbk^4\mid u_1v_1=u_2v_2\}
\]
over a field $\Bbbk$ where we may choose $\Bbbk$ to be either $\mathbb C$ or the Novikov field
$
\Lambda=\mathbb C((T^{\mathbb R})) 
$
on the A-side or the B-side respectively.
The latter $\Lambda$ is a non-archimedean field with a natural valuation map $\val: \Lambda\to\mathbb R\cup \{+\infty\}$.
We denote the projective line over $\Lambda$ by
$
\mathbb P_\Lambda =\Lambda \cup \{\infty\}
$.
Abusing the notation, the above valuation map $\val$ has a natural extension
$
\val: \mathbb P_\Lambda \to  \overline{\mathbb R} \equiv \mathbb R\cup \{\pm \infty\}
$
such that $\val(\infty)=-\infty$. Note that we may think of $\mathbb P_\Lambda$ as the union of two affine lines.

On the A-side, we work over $\mathbb C$, and a smoothing of $Z=Z(\mathbb C)$ is given by
$
X'=\{(u_1,v_1, u_2,v_2)\in\mathbb C^4\mid u_1v_1-c_1=u_2v_2-c_2 \}
$
for some fixed small numbers $c_1>c_2>0$. Consider the anti-canonical divisor 
$
\mathscr D=\{u_1v_1-c_1=0\}=\{u_2v_2-c_2=0\}
$, and its complement in $X'$ is
\begin{equation}
	\label{X_mirror_alg_var_eq_intro}
X=\{(u_1 ,v_1,u_2,v_2, z) \in \mathbb C^4\times \mathbb C^* \mid u_1v_1-c_1=u_2v_2-c_2=z \}
\end{equation}

On the B-side, we consider the same variety $Z=Z(\Lambda)$ but over $\Lambda$.
One of its crepant resolutions is the algebraic variety $Y'=\mathcal O_{\mathbb {P}_\Lambda}(-1)  \oplus  \mathcal O_{\mathbb {P}_\Lambda}(-1)$ 
that consists of the tuple $(z, u_1,v_1, u_2, v_2)$ in $\mathbb P_\Lambda\times \Lambda^4$ such that $u_1z=v_1$ and $u_2z=v_2$.
Removing the divisor $\mathscr E=\{ (v_1-1)(u_2-1)=0\}$
yields the following algebraic variety:
\begin{equation}
	\label{Y_mirror_alg_var_eq_intro}
	Y= \left\{ (x_1, x_2, z, y_1,y_2)\in\Lambda^2\times \mathbb P_\Lambda \times (\Lambda^*)^2  \ \  \mid \ \  
	\begin{aligned}
		x_1 z &=1+y_1 \\
		x_2 &=(1+y_2) z
	\end{aligned}
 \, \,
	\right\}
\end{equation}

Now, the main result of this paper is as follows:

\begin{thm}
	\label{Main_theorem_SYZ_intro}
	$Y$ is SYZ mirror to $X$.
\end{thm}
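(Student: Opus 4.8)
The plan is to establish the SYZ duality by realizing both $X$ and $Y$ as total spaces of dual affinoid/Lagrangian torus fibrations over a common base. First I would construct, on the A-side, a Lagrangian torus fibration on $X$ by combining a suitable symplectic form (induced from the embedding $X\subset\mathbb C^4\times\mathbb C^*$) with the moment-type map coming from the Hamiltonian torus action, following the template of Auroux \cite{AuTDual} and Chan–Pomerleano–Ueda \cite{Chan_Pomerleano_Ueda}: the fibration map records the $S^1\times S^1$ moment coordinates together with $\log|z|$ (equivalently $\log|u_1v_1-c_1|$). This produces an open dense subset of $X$ swept by Lagrangian tori over a base $B_0$, with a wall where the fiber bounds holomorphic discs; one computes the disc potential and finds the wall-crossing transformation. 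The codimension-$2$ locus over which the torus fibers degenerate should be read off here, matching the two numbers $c_1>c_2>0$.

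Next I would carry out the non-archimedean (B-side) construction: exhibit on $Y$ an affinoid torus fibration in the sense of Berkovich, given concretely by the valuation-type map $(x_1,x_2,z,y_1,y_2)\mapsto (\val(y_1),\val(y_2),\val(z))$ restricted to the appropriate analytic domain, and show its image and monodromy/affine structure agree with those of the A-side base $B_0$. Crucially, I expect this map to \emph{not} extend to an affinoid torus fibration everywhere: over the codimension-$2$ stratum one gets genuinely singular fibers, and these must be identified explicitly. The heart of the matter is then the comparison: using the family Floer machinery of Abouzaid \cite{AboFamilyFaithful,AboFamilyWithout} adapted with quantum correction, one assembles the local systems / Floer-theoretic mirror of the A-side fibration and checks, chart by chart, that the resulting rigid analytic space is isomorphic to (the analytification of) $Y$ as defined in \eqref{Y_mirror_alg_var_eq_intro}. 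The defining equations $x_1z=1+y_1$ and $x_2=(1+y_2)z$ should emerge precisely as the two superpotential/gluing relations dictated by the two disc classes, one on each side of the conifold.

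The main obstacle I anticipate is the treatment of the wall and the singular fibers: one must show that the naive torus fibration, after the wall-crossing correction, glues into exactly the variety $Y$ with its specified divisor $\mathscr E$ removed, and that the \emph{missing} codimension-$2$ points of the mirror correspond bijectively and compatibly (including their affine-geometric local structure) to the singular T-duality fibers on the A-side. Concretely this requires (i) a careful local model for the affinoid torus fibration near the singular stratum, showing it is a non-trivial example of "affinoid torus fibration with singular extension" in the sense announced in the abstract, and (ii) a verification that the two superpotential relations are the complete set of relations, i.e.\ that no further quantum corrections appear — for which I would invoke an explicit holomorphic-disc count on the $X'$ geometry (the conifold smoothing), using that the relevant moduli spaces are cut out transversally by the toric symmetry. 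Once these local pictures are in place, patching them over $B_0$ and invoking the family Floer functor yields the stated mirror relation $Y \simeq X^\vee$.
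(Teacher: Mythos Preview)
Your overall architecture matches the paper's: A-side Lagrangian fibration via the $T^2$-moment map together with $\log|z|$, B-side affinoid torus fibration on $Y^{\mathrm{an}}$, and the family Floer mirror $\pi_0^\vee$ as the bridge. The disc counts and the emergence of the gluing relations $x_1z=1+y_1$, $x_2=(1+y_2)z$ from the wall-crossing are also correctly anticipated, and the paper indeed realizes this via an explicit analytic embedding $g:X_0^\vee\hookrightarrow Y$ built chart by chart.

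The genuine gap is your B-side fibration. The naive map $(x_1,x_2,z,y_1,y_2)\mapsto(\val(y_1),\val(y_2),\val(z))$ does \emph{not} do the job. First, $\val(z)$ takes the value $+\infty$ on $C_1$ and $-\infty$ on $C_2$, so the map does not even land in $\mathbb R^3$ over the singular locus; but more seriously, even away from the singularities this map fails to globally reproduce the integral affine structure coming from $\pi_0$. Over the chart $\{0\neq z\neq\infty\}$ it is essentially correct, but the transition to the charts $\{x_1\neq 0\}$ and $\{x_2\neq 0\}$ forces nontrivial piecewise-linear modifications that your proposal does not account for. The paper's central technical contribution is precisely the discovery of the correct tropically continuous map $F:Y^{\mathrm{an}}\to\mathbb R^5$, whose components are built from $\min$ and $\mathrm{median}$ of valuations together with the action-coordinate function $\psi$ (encoding the symplectic areas of the disc classes), e.g.\ $G=\mathrm{median}\{\val(z)+\min\{0,\val(y_2)\},\,\psi^{(1)},\,\psi^{(2)}\}$. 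One then needs an auxiliary topological embedding $j:\mathbb R^3\hookrightarrow\mathbb R^5$ and a case-by-case verification (Theorem~\ref{F_affinoid_thm}) that $F$ is an affinoid torus fibration off exactly the image $j(\Delta)$ of the A-side discriminant, followed by the commutativity $F\circ g=j\circ\pi_0^\vee$. The paper stresses that this $F$ is ad hoc and was found ``after lots of trials''; the median plays the role of a non-archimedean bump function. Without this explicit $F$ (or an equivalent construction), your step ``show its image and monodromy/affine structure agree with those of the A-side base $B_0$'' cannot be carried out, and in particular the singular extension over the two skew lines $\Delta_1\cup\Delta_2$ has no candidate to verify.
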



\begin{defn}
	In the situation of Conjecture \ref{conjecture_SYZ} below, if the conditions (i) (ii) (iii) hold and the analytic space $\mathscr Y$ embeds into (the analytification $Y^{\mathrm{an}}$ of) an algebraic variety $Y$ over $\Lambda=\mathbb C((T^{\mathbb R}))$ of the same dimension, then we say $Y$ is {\textit{SYZ mirror}} to $X$.
\end{defn}


\begin{conjecture}
	\label{conjecture_SYZ}
	Given any Calabi-Yau manifold $X$,
	
	\begin{enumerate}[(a)]
		\itemsep 0pt
		\item  there exists a Lagrangian fibration $\pi:X\to B$ onto a topological manifold $B$ such that the $\pi$-fibers are graded with respect to a holomorphic volume form $\Omega$;
		\item  there exists a tropically continuous map $f : \mathscr Y \to B$ from an analytic space $\mathscr Y$ over the Novikov field $\Lambda=\mathbb C((T^{\mathbb R}))$ onto the same base $B$;
	\end{enumerate}

satisfying the following

	\begin{enumerate}[(i)]
			\itemsep 0pt
		\item  $\pi$ and $f$ have the same singular locus skeleton $\Delta$ in $B$;
		\item  $\pi_0=\pi|_{B_0}$ and $f_0= f|_{B_0}$ induce the same integral affine structures on $B_0=B\setminus \Delta$;
		\item $f_0$ is isomorphic to the canonical dual affinoid torus fibration $\pi_0^\vee$ associated to $\pi_0$.
	\end{enumerate}
\end{conjecture}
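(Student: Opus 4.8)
The statement to prove is Theorem~\ref{Main_theorem_SYZ_intro}: that the explicit variety $Y$ of \eqref{Y_mirror_alg_var_eq_intro} is SYZ mirror to the explicit variety $X$ of \eqref{X_mirror_alg_var_eq_intro}, in the precise sense of the definition, i.e. one must produce the data of Conjecture~\ref{conjecture_SYZ} for this particular $X$ and verify that the resulting analytic space $\mathscr Y$ embeds into $Y^{\mathrm{an}}$. The plan is to proceed in four stages. First, I would construct the A-side Lagrangian fibration $\pi : X \to B$ explicitly. Since $X$ is (a complement in) the smoothed conifold $X'$, it carries a natural Hamiltonian $T^2$-action coming from the two $S^1$-rotations $u_i \mapsto e^{i\theta}u_i$, $v_i \mapsto e^{-i\theta}v_i$ (preserving $u_1v_1$ and $u_2v_2$); composing the moment map of this action with a suitable Gross-type interpolating function in the remaining $z = u_1v_1 - c_1$ variable gives a Lagrangian torus fibration over a base $B \subset \mathbb R^3$. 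I would read off the discriminant locus $\Delta \subset B$ — here a codimension-$2$ stratum, actually two disjoint lines/segments corresponding to the two vanishing cycles at the nodes — and record the integral affine structure that $\pi_0 = \pi|_{B_0}$ induces on $B_0 = B \setminus \Delta$, together with the grading of the fibers with respect to the holomorphic volume form $\Omega$ on $X$. This establishes condition (a).

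Second, on the B-side I would compute, over the smooth locus $B_0$, the family Floer / non-archimedean dual fibration $\pi_0^\vee$ attached to $\pi_0$. Concretely, one builds the affinoid torus fibration $\pi_0^\vee$ by the standard Fukaya/Tu/Abouzaid recipe: to a point $b \in B_0$ assign the space of pairs (Lagrangian fiber $L_b$, rank-one unitary local system) modulo the $\Lambda$-valued gluing maps dictated by the wall-crossing transformations, which in this toric-like situation are governed by the counts of Maslov-index-$0$ discs bounded by $L_b$. Because $X$ is a conifold smoothing, these disc counts are explicit — each wall contributes a single factor of the form $1 + T^{\omega}\,(\text{monomial})$, exactly as in Auroux's analysis of the conifold in \cite{AuTDual} — so the transition functions between charts are explicit cluster-type transformations. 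Patching the charts yields an analytic space $\mathscr Y$ over $\Lambda$ together with a tropically continuous map $f_0 = \pi_0^\vee : \mathscr Y_0 \to B_0$. This is condition (b) over $B_0$, and conditions (ii) and (iii) are then true essentially by construction (the affine structure and the fibration $f_0$ are \emph{defined} from $\pi_0$).

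Third — and this is the technical heart — I would extend $f_0$ across the discriminant $\Delta$ and identify the total space with (an open piece of) $Y^{\mathrm{an}}$. The transition functions computed in stage two are precisely the change-of-chart maps of a $\Lambda$-cluster variety; I would match them term-by-term with the defining equations $x_1 z = 1 + y_1$, $x_2 = (1+y_2)z$ of \eqref{Y_mirror_alg_var_eq_intro}, so that the charts of $\mathscr Y_0$ glue to the analytification of $Y$ minus a codimension-$2$ locus. One then checks that the composite $Y^{\mathrm{an}} \dashrightarrow B$ (valuation of the appropriate monomials, extended to $\mathbb P_\Lambda$ via $\val(\infty) = -\infty$ as set up in the excerpt) is tropically continuous and extends $f_0$ over all of $B$, with $f$ and $\pi$ sharing the \emph{same} singular skeleton $\Delta$ — this is condition (i). The upshot is that the ``missing'' codimension-$2$ points of the cluster variety $Y$ are exactly the fibers of $f$ over $\Delta$, i.e. the dual SYZ singular fibers, which is the phenomenon the introduction advertises.

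Fourth, assemble: conditions (a), (b), (i), (ii), (iii) have been verified and $\mathscr Y \hookrightarrow Y^{\mathrm{an}}$ with $\dim Y = \dim X$, so by the definition $Y$ is SYZ mirror to $X$, proving Theorem~\ref{Main_theorem_SYZ_intro}. \textbf{The main obstacle} I anticipate is stage three: controlling the disc potential across the wall and proving that the naive chart-by-chart gluing genuinely extends analytically over $\Delta$ (rather than only over $B_0$), and that the extension is the analytification of the \emph{algebraic} variety $Y$ rather than merely some analytic space. This requires (a) a precise enumeration of the relevant holomorphic discs and their $T$-weights near the nodes — where the interpolating Lagrangian fibration degenerates — and (b) a careful comparison of the Berkovich-analytic topology near the codimension-$2$ stratum with the Zariski topology on $Y$, to see that no further points are added or lost. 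A secondary subtlety is checking the grading/Calabi-Yau compatibility in (a) once the anti-canonical divisor $\mathscr D$ is removed, so that $\Omega$ is genuinely a holomorphic volume form on the open $X$ and the special-Lagrangian condition (or its graded surrogate) holds for the fibers of $\pi$.
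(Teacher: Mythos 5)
Your outline reproduces the paper's overall architecture (A-side fibration and its affine charts, family Floer dual over $B_0$ with the explicit wall-crossings $1+y_1$, $1+y_2$, embedding into $Y^{\mathrm{an}}$, then extension across $\Delta$), but at stage three — which you yourself flag as the main obstacle — there is a genuine gap: you give no mechanism for the extension, and the mechanism you hint at would not work. You describe the extended map as ``valuation of the appropriate monomials, extended to $\mathbb P_\Lambda$''; a map built from valuations of monomials alone is a genuinely tropical (monomial) map and cannot induce on $B_0=\mathbb R^3\setminus\Delta$ the integral affine structure of $\pi_0$, which has nontrivial monodromy around the two skew lines, nor can it have singular locus exactly $\Delta$. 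The paper's solution is an explicit, admittedly ad hoc, tropically continuous map $F:Y^{\mathrm{an}}\to\mathbb R^5$ whose components are $\min$'s and a $\median$ of $\val(x_1),\val(x_2),\val(z),\val(y_1),\val(y_2)$ \emph{combined with the transcendental action-coordinate function} $\psi(\val(y_1),\val(y_2),\log c_i)$ computed from A-side disk areas (the median playing the role of a non-archimedean partition of unity), together with a base embedding $j:\mathbb R^3\to\mathbb R^5$. The actual proof then consists of two verifications you do not plan for: Theorem \ref{F_affinoid_thm} (a case-by-case analysis of $F(Y)$ determining that the $F$-smooth locus is the complement of $j(\Delta)$ and that $j(B)=F(\mathscr Y)$ for $\mathscr Y=\{|x_2|<1\}$), and Theorem \ref{key_fibration_preserving} ($F\circ g=j\circ\pi_0^\vee$ for explicit chartwise embeddings $g_1,g_2,g_3$), which is what matches the affine structures and singular loci. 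Without producing such an $F$ (or an equivalent device), conditions (i) and (ii) are not established.

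Two secondary corrections. First, your claim that the family Floer charts ``glue to the analytification of $Y$ minus a codimension-$2$ locus'' is false as stated: the glued space $X_0^\vee\cong T_1\sqcup T_2\sqcup T_3/\sim$ embeds as a strictly smaller \emph{analytic} open subset of $Y^{\mathrm{an}}$, with each $T_k$ strictly contained in the corresponding Zariski torus chart $\mathcal T_k$; the codimension-$2$ locus $C_1\cup C_2$ is what maps onto $\Delta$ under $F$, not the complement of the glued charts. Second, your worry about proving the extension ``is the analytification of the algebraic variety $Y$ rather than merely some analytic space'' is moot: the definition only requires the analytic space $\mathscr Y$ to embed into $Y^{\mathrm{an}}$, and in the paper $\mathscr Y$ is just the analytic open subset $\{|x_2|<1\}$, not all of $Y^{\mathrm{an}}$.
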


A more precise statement would describe $(\mathscr Y, f)$ as SYZ mirror to $(X,\pi)$. For clarity, we use the simplified statement as above. Note that we mainly focus on the \textit{fibration} $f$ instead of the \textit{space} $\mathscr Y$.
The SYZ conjecture focuses on finding mirror fibration partners instead of merely identifying mirror spaces. 
Although the mirror space identification between $X$ and $Y$ for the conifold is known, instances of fibration duality with singular fibers are exceptionally rare in the literature of SYZ conjecture.


\[
\xymatrix{
	X \ar[d]_{\pi} & X_0\ar[l] \ar[d]_{\pi_0}   \ar@{.}@/_1.15pc/[rr]_{\small \text{` T-duality '}}    & & \mathscr Y_0\ar[d]^{f_0} \ar[r]  & \mathscr Y \ar[d]^f \\
	B & B_0 \ar[l] \ar@{.}@/^1.15pc/[rr] & &  B_0 \ar[r] & B
}
\]

\subsection{Outline of the story}
\label{ss_outline_intro}

To comprehend the SYZ fibration duality, we usually start with a fibration denoted as $\pi: X\to B$, where the general fiber is represented by a Lagrangian torus, and the discriminant locus is expressed as $\Delta\subset B$ with $B_0=B-\Delta$. Initially, the "dual" of $\pi_0=\pi|_{B_0}$ can be depicted as the corresponding dual torus fibration $f_0: \mathscr Y_0 \cong R^1\pi_{0*}(U(1))\to B_0$. A genuine "dual" of $\pi$ should be a compactification or an extension of $f_0$ to a particular $f$ (see Gross's introduction in \cite{Gross_topo_MS}).

The Lagrangian torus fibration $\pi_0$ inherently defines an integral affine structure on $B_0$, so a suitable dual fibration $f_0$ should preserve this structure. Locally, $\pi_0$ is based on the logarithm map, $\mathrm{Log}: (\mathbb C^*)^n\to\mathbb R^n$, $z_i\mapsto \log |z_i|$. By choosing an appropriate atlas $(U_i\to V_i)$ for the integral affine structure, we can view $\pi_0:X_0\to B_0$ as a gluing of local integrable system segments $\mathrm{Log}^{-1}(V_i)\to V_i$. In particular, we may view $X_0$ as 
\[
\coprod \mathrm{Log}^{-1}(V_i) / \sim
\]
where $\sim$ indicates a gluing in the category of symplectic manifolds.

On the mirror side, a dual fibration $f_0$ is expected to incorporate quantum correction data for $\pi_0$ via holomorphic disks (cf. \cite{Joyce_book}, Section 9.4.1).  Since quantum corrections involve these disks, Floer theory becomes a natural framework.  However, Gromov's compactness theorem, a cornerstone of symplectic geometry, guarantees Floer-theoretic invariant convergence only over the non-archimedean Novikov field $\Lambda = \mathbb{C}((T^\mathbb{R}))$ instead of the complex numbers.  This naturally leads us to the non-archimedean version of integrable systems developed by Kontsevich and Soibelman \cite{KSAffine}, nowadays called \textit{affinoid torus fibration} \cite{NA_nonarchimedean_SYZ}.
The tropicalization map, $\trop: (\Lambda^*)^n \to \mathbb{R}^n$, offers a non-archimedean counterpart to the logarithm map, $\mathrm{Log}$.  
A key distinction lies in the substitution of the archimedean norm in $\mathbb{C}$ with its non-archimedean counterpart in $\Lambda$. We are then led to a natural question: can a mirror fibration $f_0: \mathscr{Y}_0 \to B_0$ exist that retains a similar composition of local segments $\trop^{-1}(V_i) \to V_i$, but substitutes $\trop$ for $\mathrm{Log}$? 
\[
\Scale[0.8]{
\xymatrix
{
X_0 \ar[dr]|-{\pi_0} & \mathrm{Log}^{-1}(V_i) \ar[dr] \ar@{.}[l] & & \trop^{-1}(V_i) \ar[dl] \ar@{.}[r]& \textbf ? \ar@{-->}[dl]|-{\textbf ?}    \\
& B_0 & V_i	\ar@{.}[l] \ar@{.}[r] & B_0
}}
\]
If so, the resulting $f_0:\mathscr Y_0\to B_0$ would be an affinoid torus fibration.
The total space of such a affinoid torus fibration looks like
\[
\coprod \trop^{-1}(V_i) / \sim
\]
where $\sim$ refers to a gluing in the category of non-archimedean analytic spaces yet. 
Indeed, if the $\{V_i\}$ originate from the same integral affine structure \textit{atlas} on $B_0$, then the above decomposition automatically preserves the structure (cf. \cite[\S 4.1]{KSAffine}). This makes it a strong candidate for the dual fibration. 


Our prior foundational work \cite{Yuan_I_FamilyFloer} establishes that Maslov-0 holomorphic disks bounded by $\pi_0$-fibers provide a unique canonical algorithm for analytically gluing local segments $\trop^{-1}(V_i)\to V_i$ into a \textit{canonical dual affinoid torus fibration}, denoted by $\pi_0^\vee$. Notice that each fiber of $\trop$ is a copy of $U_\Lambda^n$. In geometric terms, the mirror analytic space $\mathscr Y_0$ admits a convenient set-theoretic description \cite{Yuan_I_FamilyFloer}:
\[
\mathscr Y_0 = \coprod_{q\in B_0} H^1(L_q; U_\Lambda) \cong \coprod_{q\in B_0} \{q\} \times U_\Lambda^n 
\]
where $L_q$ denotes the Lagrangian fiber over $q$ and $U_\Lambda$ is the unit circle in $\Lambda$ with respect to the non-archimedean norm.
This refers to item (iii) in Conjecture \ref{conjecture_SYZ}.

Despite the existence of a canonical dual fibration over the smooth locus $B_0$, current technology does not permit the canonical construction of dual singular fibers. Consequently, our approach relies on ad hoc constructions. For this, we hope to use fewer charts in the atlas $(U_i \to V_i)$ to achieve a clearer understanding of the integral affine structure on the smooth locus $B_0$. In the example studied in this paper, this is feasible with additional information about the locations of Maslov-0 disks (Proposition \ref{void_wall_cross_prop}).
Roughly, an atlas of integral affine structure will be produced by analyzing the local systems $\bigcup_{q\in B_0} \pi_2(X,L_q)$ and $\bigcup_{q\in B_0} \pi_1(L_q)$ on $B_0$.
Moreover, superpotentials can often reveal extra information for the analytic gluing by computing them in different chambers.
%


Finally, we explain the non-archimedean terms in Conjecture \ref{conjecture_SYZ}.
The \textit{tropically continuous map} is introduced by Chambert-Loir and Ducros \cite[3.1.6]{Formes_Chambert_2012} while our Definition \ref{tropically_continuous_defn} is slightly modified (see also the work of Gubler, Jell, and Rabinoff \cite{gubler2021forms}.)
In brief, this notion is reasonable because the reduced K\"ahler forms inevitably exhibit non-smoothness, which turns out to be related to the dual singular fibers.
A smooth point $p$ in the base of such a tropically continuous map $F$ can be defined as in \cite{KSAffine}, which means that there is an analytic isomorphism from $F^{-1}(V)\to V$ to $\trop^{-1}(V)\to V$ for a small neighborhood $V$ of $p$. If all points in the base are smooth, the map becomes the aforementioned affinoid torus fibration, inducing a natural integral affine structure on its base as well \cite[\S 4]{KSAffine}.
Remark that the work of Joyce \cite{Joyce_Singularity,Joyce_book} suggests that there are troubles to match the singular loci in the context of SYZ conjecture.
However, we will provide an explicit solution of fibration pair with matching singular loci.

\subsection{Explicit mirror construction}
The duality condition (iii) in Conjecture \ref{conjecture_SYZ} may require certain specialized knowledge of Lagrangian Floer theory (in families).
If we temporarily omit (iii), then the proof of Theorem \ref{Main_theorem_SYZ_intro} only require general knowledge since the mirror data can be expressed explicitly.
In \S \ref{s_mirror_construction}, we will provide a brief review of the family Floer mirror construction; we also suggest reading the outline in \S \ref{ss_outline_intro}.
For legibility, we focus on the algorithmic aspect, keeping the reliance on the Floer-theoretic foundation \cite{Yuan_I_FamilyFloer} to a minimum.

\begin{figure}
	\centering
	\includegraphics[width=5.2cm]{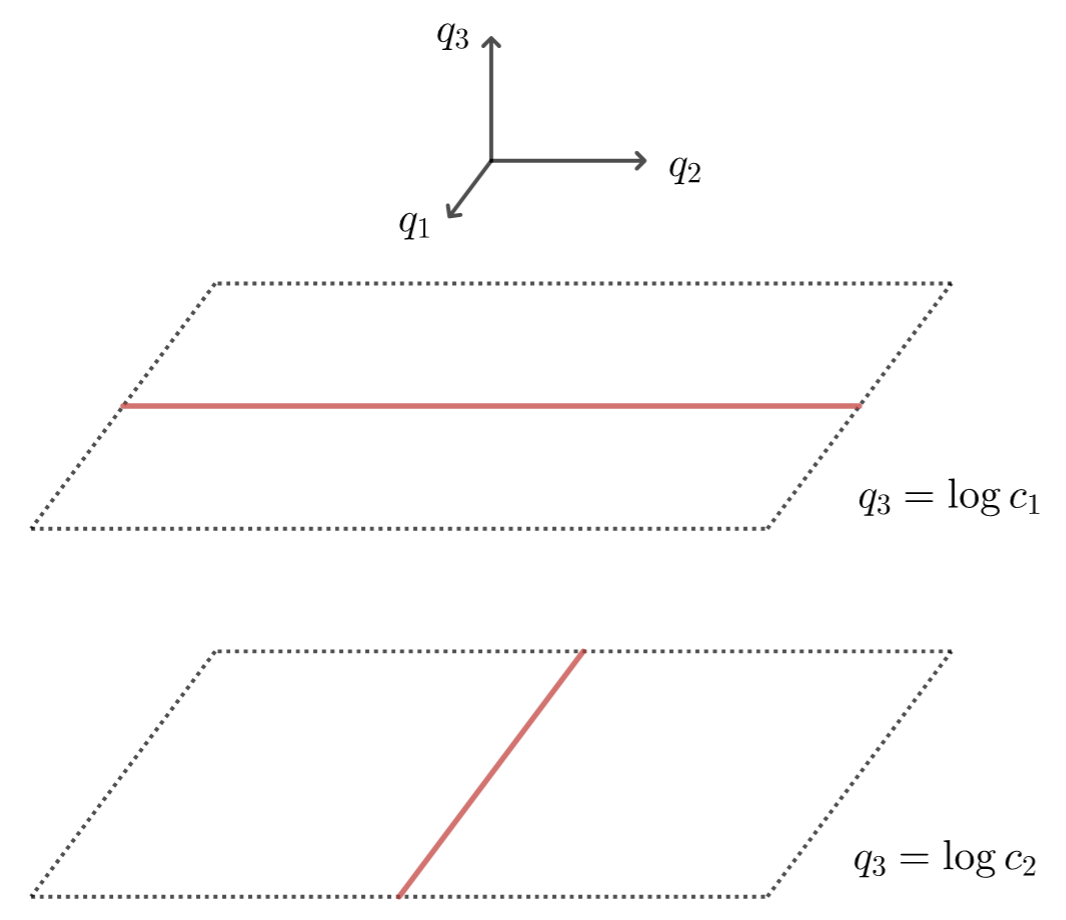}
	\caption{\small The singular locus consists of two skew lines in $\mathbb R^3$}
	\label{figure_singular_locus}
\end{figure}

\begin{proof}[Proof of Theorem \ref{Main_theorem_SYZ_intro} omitting (iii)]
	There is a special Lagrangian fibration in $X$ (cf. \cite{Chan_Pomerleano_Ueda}):
	\begin{equation}
			\label{pi_intro_eq}
		\pi =(\pi_1,\pi_2, \pi_3): X \to\mathbb R^3 , \qquad (u_1,u_2,v_1,v_2, z)\mapsto \Big( \frac{1}{2} (|u_1|^2-|v_1|^2), \frac{1}{2} (|u_2|^2-|v_2|^2), \log |z| \Big)
	\end{equation}
	Let $B_0$ and $\Delta$ denote the smooth and singular loci of $\pi$.
	It is known that $\Delta$ is given by the disjoint union of two skew lines (see Figure \ref{figure_singular_locus}):
	\begin{equation}
		\label{Delta_intro_eq}
	\Delta= \{0\}\times \mathbb R \times \{\log c_1\} \ \ \cup \ \ \mathbb R \times \{0\} \times \{\log c_2\}  \quad =:\Delta_1\cup \Delta_2
	\end{equation}
	There is a continuous map\footnote{For clarity, we always fix the choice of $\psi$ as in \S \ref{ss_action_coordinates}. It is actually not unique, and one can intentionally make various different choices of $\psi$. However, these differences are inessential.
	}
	$\psi : \mathbb R^3\to \mathbb R$, which is smooth in $B_0$, such that $(\pi_1,\pi_2, \psi)$ forms a set of the action coordinates locally over $B_0$.
	Define a non-archimedean analytic space $\mathscr Y=\{|x_2|<1\}$ in the analytification $Y^{\mathrm{an}}$ of $Y$.
	Define a continuous embedding $j:\mathbb R^3\to\mathbb R^5$ sending $q=(q_1,q_2,q_3)$ to
	\[
	(\theta_1(q),\theta_2(q),\vartheta(q), q_1,q_2)
	\]
where
\vspace{-1em}
\begin{align*}
		\theta_1(q) &=\min\{-\psi(q), -\psi(q_1,q_2,\log c_1) \} +\min\{0,q_1\}+\min\{0,q_2\}  \\
		\theta_2(q) &=\min\{ \ \ \ \psi(q),  \ \ \ \psi(q_1,q_2,\log c_2) \}   \\
		\vartheta(q) &=\median\{\psi(q), \ \ \ \psi(q_1,q_2,\log c_1), \ \ \ \psi( q_1,q_2,\log c_2) \}
\end{align*}
Define a tropically continuous map $F: Y^{\mathrm{an}}\to\mathbb R^5$ by
\[
F(x_1,x_2,z,y_1,y_2)=(F_1,F_2,G,\val(y_1),\val(y_2))
\]
where
\vspace{-1em}
	\begin{align*}
		F_1&=\min\{\val(x_1), -\psi(\val(y_1),\val(y_2), \log c_1) +\min\{0,\val(y_1)\}+\min\{0,\val(y_2)\} \} \\
		F_2&=\min\{\val(x_2),  \ \ \ \psi(\val(y_1),\val(y_2),\log c_2) \} \\
		G&=\median\{\val(z)+\min\{0,\val(y_2)\} , \ \ \ \psi(\val(y_1),\val(y_2),\log c_1), \ \ \ \psi(\val(y_1),\val(y_2), \log c_2) \} 
	\end{align*}
Then, we can verify that $j(\mathbb R^3)=F(\mathscr Y)$. So, we can define (cf. \cite[\S 8]{KSAffine})
\begin{equation}
	\label{f_intro_one_page_proof}
f=j^{-1} \circ F : \mathscr Y \to\mathbb R^3	
\end{equation}
Finally, we will find: the smooth / singular loci and the induced integral affine structure of $f$ all precisely agree with those of $\pi$ in (\ref{pi_intro_eq}). (The full details will be presented in the main body of this paper.)
\end{proof}

\begin{rmk}
	\label{j_minor_role}
	While the mirror \textit{space} is known to be the conifold resolution, realizing the mirror \textit{fibration} within it is a separate challenge and appears to be unknown prior to this work. The conifold resolution equations $x_1z=1+y_1$ and $x_2=(1+y_2)z$, given in (\ref{Y_mirror_alg_var_eq_intro}), impose rigid restrictions on non-archimedean valuations $\val(x_1),\val(x_2),\val(y_1),\val(y_2),\val(z)$ of all variables. Properly arranging these valuations is essential for accommodating the integral affine structure and singular locus. The expressions of solutions may not be unique, as one can replace $(j, F)$ with another pair $(\phi(j), \phi(F))$ for any automorphism $\phi$. The role of $j$ is also relatively minor, as it is also fine to compare $j\circ \pi$ and $F$ instead of $\pi$ and $f=j^{-1}\circ F$.
\end{rmk}

\begin{rmk}
The median might represents a non-archimedean partition of unity.
For the Berkovich topology on the affine line with variable $x$, there is a bump function $\phi(x)=\mathrm{median}\{\val(x),0,1\}$. In $\{\val(x) < 0\}=\{|x|>1\}$ and $\{\val(x)> 1\}=\{|x|<1/e\}$, this $\phi(x)$ takes values 0 and 1. While this idea is moderately more systematically expanded in \cite{Yuan_An}, the approach of this paper tends to be ad hoc, largely due to the fact that addressing 3-fold singularities is generally harder than that of surfaces (cf. \cite[3.6]{seidel2001braid}).
Many homological mirror symmetry results are highly valued, despite their reliance on ad hoc strategies.
One of the main contributions of this paper is the introduction of a quite explicit example, which enriches the sparse set of examples substantiating the fibration duality aspect of the SYZ conjecture.
The task of "finding a solution with desired properties" is very different from "checking if a given solution satisfies desired properties".
The latter is valueless, but we address the former task.
\end{rmk}

\begin{rmk}
It is not always possible to fully justify why a specific ad hoc construction, like the medians in (\ref{f_intro_one_page_proof}), is chosen and preferred over another.
In the early stages of the theory, we lack developed tools, which leaves us with no choice but to search for solutions {by hand}. We apologize if this process may be tedious and unmotivated. However, examples with fully explicit formulas prevent our mirror construction method \cite{Yuan_I_FamilyFloer} from becoming a "castle in the air."
Besides, there are certain clues from Floer-theoretic basis as sketched in \S \ref{s_mirror_construction}; no matter how peculiar our solution appears in the first glance, it {explicitly} represents a new instance of the SYZ conjecture, including singular fibers. The credibility is further bolstered by the extra evidence in (\ref{cluster_SYZ_phenomenon}).
\end{rmk}

The structure of the proof can be briefly outlined as follows.

\begin{itemize}[topsep=1pt, itemsep=1pt, parsep=0pt]
	\item In \S \ref{s_A_side}, we review the Lagrangian fibration $\pi$ on the conifold smoothing $X$. We specify the atlas of integral affine structure in explicit terms of the symplectic areas of holomorphic disks. Studying the monodromy of these disks is required for a global grasp of the integral affine structure on $B_0$.
	
	\item In \S \ref{s_mirror_construction}, we review the family Floer mirror construction with a focus on the algorithmic aspect. Then, we show an identification between the abstract non-archimedean mirror space structure on $R^1\pi_{0*}(U_\Lambda)$ and a concrete analytic space (\ref{T_123_glue}), obtained by gluing three analytic open domains $T_1, T_2, T_3$ in the torus $(\Lambda^*)^3$ equipped with Berkovich analytic topology.
	
	\item In \S \ref{s_B_side}, we construct a tropically continuous fibration $F$ on the analytification of conifold resolution $Y$ and examine its image space in $\mathbb R^5$ (Theorem \ref{F_affinoid_thm}).
	We also determine the smooth and singular loci of $F$.
	Remark that $F$ is somewhat ad hoc but is carefully designed to successfully ensure the T-duality matching.
	
	\item In \S \ref{s_T_duality_matching}, we define an analytic embedding map $g$ into the conifold resolution $Y$ and verify that the $g$ intertwines the canonical dual affinoid torus fibration $\pi_0^\vee: R^1\pi_{0*}(U_\Lambda)\to B_0$ and the smooth part of the above $F$ up to a homeomorphism $j$ between their base spaces. This matches the integral affine structure and enables the singular extension based on the above study of $F$.
\end{itemize}

\subsection{Additional evidence}
\label{sss_speculation_Chan}

We recognize that verifying the shared properties between the two fibrations $\pi$ and $f$ can be laborious and wearisome, notably in Theorem \ref{F_affinoid_thm}. To ensure conviction before engaging in computations, let's present some easily verifiable evidence for the reader. See (\ref{cluster_SYZ_phenomenon}).

It is interesting to note that the examples of Conjecture \ref{conjecture_SYZ} are often related to certain cluster varieties on A-side and/or B-side.
We recall a {cluster variety} generally has a collection of charts equivalent to the algebraic torus $(\mathbb C^*)^n$ glued by birational transition maps (see \cite{GHK_birational}). As mentioned before, we are freed from the constraint of the special Lagrangian condition and only need a more topological condition, graded or zero Maslov class.
We expect that such Lagrangians should appear abundant, which seems to agree with the result of M. Castronovo \cite{castronovo2019exotic}.
We might imagine there would be such Lagrangian fibrations in each algebraic torus chart of a cluster variety in a compatible way so that a global fibration could be produced. Thus, we expect that the next examples of Conjecture \ref{conjecture_SYZ} would be found for some cluster varieties such as Grassmannians. We defer this to future studies.

For $j=1,2$, the component $\pi_j$ in (\ref{pi_intro_eq}) is the moment map of the $S^1$-action $(u_j,v_j)\mapsto (e^{it}u_j, e^{-it}v_j)$, and the fixed point sets of them are given by
\begin{equation}
	\label{C_1_C_2_hat_eq}
	\begin{aligned}
		\hat C_1 &:=\{  u_1=v_1=0, \ z=-c_1  \} \\
		\hat C_2 &:=\{ u_2=v_2=0, \ z=-c_2  \}
	\end{aligned}
\end{equation}
Then, by (\ref{Delta_intro_eq}), we can check that
\begin{equation}
	\label{cluster_SYZ_A_side_phenomenon}
	\pi(\hat C_1)=\Delta_1 \qquad \text{and} \qquad \pi (\hat C_2)=\Delta_2
\end{equation}

The use of the Novikov field $\Lambda=\mathbb C((T^{\mathbb R}))$ is essential for the sake of quantum correction.
However, we want to further demonstrate that working over $\Lambda$ has great advantages even beyond the symplectic geometry scopes.
To see this, let's consider the same variety as (\ref{Y_mirror_alg_var_eq_intro}) over $\mathbb C$. Namely, we take (cf. \cite[\S 2]{Chan_Pomerleano_Ueda})
\[
	Y(\mathbb C)= \left\{ (x_1, x_2, z, y_1,y_2)\in  \mathbb C^2\times \mathbb {CP}^1 \times (\mathbb C^*)^2  \ \  \mid \ \  
	\begin{aligned}
		x_1 z &=1+y_1 \\
		x_2 &=(1+y_2) z
	\end{aligned}
	\right\}
\]

\begin{rmk}
	Note that $Y(\mathbb C)$ is contained in $Z^0=\{ (x_1,x_2,y_1,y_2) \in\mathbb C^2\times (\mathbb C^*)^2 \mid x_1x_2=(1+y_1)(1+y_2)\}$. Meanwhile, Chan, Pomerleano, and Ueda in \cite[Corollary A.5]{Chan_Pomerleano_Ueda} prove that the symplectic cohomology $SH^0(X)$ is isomorphic to the coordinate ring of $Z^0$. (Pascaleff \cite{pascaleff2019symplectic} also obtains some similar results for log Calabi-Yau surfaces.)
	This computation should be the evidence for a closed-string approach of mirror construction developed by Groman and Varolgunes \cite{groman_varolgunes_2022closed_string}, and its relation to our open-string approach should be achieved by some version of closed-open maps, which will be discussed elsewhere.
\end{rmk}

Consider the three Zariski open charts in $Y(\mathbb C)$ given by
\begin{equation}
	\label{mathcal_T_k_eq}
\mathcal T_1=\{x_1\neq 0\} \ ,  \ \  \mathcal T_2=  \{0\neq z\neq \infty\}   \  ,  \ \ \text{and} \ \mathcal T_3= \{x_2\neq 0\}
\end{equation}
all of which are algebraically equivalent to $(\mathbb C^*)^3$ and cover the complement $Y(\mathbb C)\setminus (C_1\cup C_2)$
where
\begin{equation}
	\label{C_1_C_2_eq}
	\begin{aligned}
	C_1&=\{ x_1=x_2=0, y_1=-1, z=0 \} \\
	C_2&=\{ x_1=x_2=0, y_2=-1, z=\infty \}
\end{aligned}
\end{equation}
are of codimension 2 in $Y(\mathbb C)$.
In general, a cluster variety over $\mathbb C$ is usually only covered by the algebraic torus charts up to codimension two (see e.g. the work of Gross, Hacking, and Keel in \cite[\S 3.2]{GHK_birational}).

The speculation of Chan, Pomerleano, and Ueda in \cite[Remark 2.5]{Chan_Pomerleano_Ueda} is that the codimension-2 `missing points' in $C_1\cup C_2$ can be possibly understood by the dual SYZ singular fibers in some way.
Now, we verify it and show, however, that it has to be understood in the non-archimedean world!

Let's go back to the  variety $Y$ in (\ref{Y_mirror_alg_var_eq_intro}) over the non-archimedean Novikov field $\Lambda=\mathbb C((T^{\mathbb R}))$.
By considering the analytification $Y^{\mathrm{an}}$ of $Y$, we can take advantage of the non-archimedean analytic topology that is finer than the Zariski topology.
For instance, instead of the algebraic torus in (\ref{mathcal_T_k_eq}), our mirror analytic space is obtained by gluing three \textit{analytic} open subsets $T_1,T_2,T_3$ (see \S \ref{ss_g}) that are strictly contained in the \textit{Zariski} open subsets $\mathcal T_1,\mathcal T_2,\mathcal T_3$ in (\ref{mathcal_T_k_eq}) respectively.

We abuse the notations and still denote by $C_1$ and $C_2$ the sub-varieties in $Y$ over $\Lambda$ of the same equations in (\ref{C_1_C_2_eq}).
Using the singular locus $\Delta=\Delta_1\cup\Delta_2$ in (\ref{Delta_intro_eq}) and the dual singular analytic fibration map $f$ in (\ref{f_intro_one_page_proof}), it is straightforward to check that
$j(\Delta_1) =F(C_1)$ and $j(\Delta_2)=F(C_2)$, hence,
\begin{equation}
	\label{cluster_SYZ_phenomenon}
f(C_1)=\Delta_1 \qquad \text{and} \qquad f(C_2)=\Delta_2
\end{equation}
\vspace{-2em}
\begin{proof}[\underline{Sketch of computation}]
	Let $(x_1,x_2,y_1,y_2,z)$ be an arbitrary point in $C_1$. Then, $\val(x_1)=\val(x_2)=\val(z)=+\infty$ and $\val(y_1)=0$.
	Set $s=\val(y_2)\in\mathbb R$.
For the $F_1, F_2, G$ in the formula (\ref{f_intro_one_page_proof}), we obtain that
		\begin{align*}
		F_1&=\min\{+\infty , -\psi(0, s , \log c_1) +\min\{0,s\} \} &&= -\psi(0, s , \log c_1) +\min\{0,s\}  \\
		F_2&=\min\{+\infty,  \ \ \ \psi(0,s,\log c_2) \} &&= \ \ \ \psi(0,s,\log c_2) \\
		G&=\median\{+\infty , \ \ \ \psi(0,s, \log c_1), \ \ \ \psi(0,s, \log c_2) \} &&= \ \ \ \psi(0,s, \log c_1)
	\end{align*}
Notice $\psi$ increases with the last variable and $c_1>c_2$.
This describes $F(C_1)$, and we next check $j(\Delta_1)$. 
Note that $\Delta_1$ is parameterized by $(0,s, \log c_1)$ for $s\in\mathbb R$.
For the $\theta_1,\theta_2,\vartheta$ in (\ref{f_intro_one_page_proof}), we gain that
\begin{align*}
	\theta_1 &= -\psi(0,s,\log c_1)  +\min\{0,s\}  \\
	\theta_2 &= \ \ \ \psi(0,s,\log c_2)    \\
	\vartheta &=\median\{\psi(0,s,\log c_1), \ \ \ \psi(0,s,\log c_1), \ \ \ \psi( 0,s,\log c_2) \} = \psi(0,s,\log c_1) 
\end{align*}
Therefore, we have verified that $f(C_1)=\Delta_1$. Similarly, one can easily verify that $f(C_2)=\Delta_2$.
\end{proof}


Although the construction of $f$ is inspired by our version of SYZ T-duality in Conjecture \ref{conjecture_SYZ}, 
the above phenomenon is also interesting
merely on the B-side, forgetting the A-side and mirror symmetry background. In particular, we do not have to work over $\Lambda=\mathbb C((T^{\mathbb R}))$.
In view of (\ref{cluster_SYZ_phenomenon}), we can propose a concrete question purely within the area of non-archimedean Berkovich geometry:

\begin{question}
	Let $V$ be a cluster variety over an arbitrary non-archimedean field $\Bbbk$. Assume $C$ is a codimension-2 sub-variety in $V$ such that $V\setminus C$ can be covered by a collection of algebraic torus charts. Does there exist a Zariski-dense analytic open subset $\mathscr V$ in $V^{\mathrm{an}}$
	and a tropically continuous map $f:\mathscr V\to B$ onto a topological manifold $B$ such that $\Delta:=f(C)$ is exactly the singular locus of $f$?
\end{question}

We conjecture that the answer is positive for any cluster variety of finite type (in the sense that $V\setminus C$ is covered by a finite collection of algebraic torus charts).
At least, it holds for the example of this paper (\ref{Y_mirror_alg_var_eq_intro}) as well as all the examples in \cite{Yuan_local_SYZ} by similar explicit computations.
Hopefully, by studying the above question, we could discover many interesting relations among SYZ conjecture, symplectic geometry, non-archimedean geometry, and cluster structures.

Finally, we indicate that $f$ solves Conjecture \ref{conjecture_SYZ} for the Lagrangian fibration $\pi$ in (\ref{pi_intro_eq}).
It is very intriguing to put (\ref{cluster_SYZ_A_side_phenomenon}) and (\ref{cluster_SYZ_phenomenon}) together, which is a striking coincidence.
The matching of singular loci of A-side and B-side seems to be related to the cluster theory in a very concrete way.
Hopefully, we could formulate this phenomenon for the singular fibers on both sides more precisely in the near future.


\vspace{1em}
\textbf{Acknowledgment . }
The author is deeply grateful to the referee for their meticulous efforts in reviewing the paper, particularly for identifying various typos that would have gone unnoticed without their careful and thorough examination.
The author is also grateful to Kenji Fukaya and Eric Zaslow for their constant support and to Mohammed Abouzaid, Siu-Cheong Lau, Wenyuan Li, and Vivek Shende for helpful conversations.

\section{Lagrangian fibration on the deformed conifold: A side}
\label{s_A_side}

\subsection{Lagrangian fibration}
\label{ss_Gross_fib}
Let $c_1>c_2>0$ be fixed positive real numbers.
Define
\[
X=\{ (u_1,u_2,v_1,v_2, z)\in \mathbb C^4\times \mathbb C^* \mid u_1v_1=z+c_1, \quad u_2v_2=z+c_2 \}
\]
which can be regarded as an open subset contained in the smoothing variety
\[
X'= \{ (u_1,u_2,v_1,v_2)\in\mathbb C^4 \mid u_1v_1-c_1=u_2v_2-c_2\}
\]
of the conifold $\{u_1v_1=u_2v_2\}$.
For the divisor
$
\mathscr D=\{ u_1v_1=c_1\}=\{u_2v_2=c_2\}
$
in $X'$, we have 
\[
X=X'\setminus \mathscr D
\]
Alternatively, we interpret $X$ (resp. $X'$) as the fiber product of the two maps
$
f_i:=u_iv_i-c_i: \mathbb C^2\to\mathbb C_z^*$ (resp. $\mathbb C^2 \to \mathbb C_z$) for $i=1,2$ fitting in the following diagram:
\begin{equation}
	\label{fiber_product_X+_eq}
\xymatrix{
	&\,  \ X' \ar[dl]_{g_1} \ar[dr]^{g_2}  \\
	\mathbb C^2_{u_1,v_1} \ar[dr]_{f_1}& &  \mathbb C^2_{u_2,v_2}  \ar[dl]^{f_2}& \\
	 & \mathbb C_z
}
\end{equation}
We equip $X$ with the restriction $\omega=d\lambda$ of the standard symplectic form $\omega_0$ on $\mathbb C^4\times \mathbb C^*$, namely,
\[
\textstyle
\omega_0= \sum_{k=1,2}  ( \frac{i}{2} du_k\wedge d\bar u_k +  dv_k\wedge d\bar v_k) + \frac{i}{2} \frac{dz\wedge d\bar z}{|z|^2}
\]
The following is a special Lagrangian fibration
\[
\textstyle
\pi =(\pi_1,\pi_2, \pi_3): X \to\mathbb R^3 , \qquad (u_1,u_2,v_1,v_2, z)\mapsto \Big( \frac{1}{2} (|u_1|^2-|v_1|^2), \frac{1}{2} (|u_2|^2-|v_2|^2), \log |z| \Big)
\]
with respect to the holomorphic volume form $\Omega=d\log z\wedge d\log u_1\wedge d\log u_2$ (cf. \cite{Chan_Pomerleano_Ueda}).
Denote by $L_q$ the Lagrangian fiber over $q=(q_1,q_2,q_3)$ in $\mathbb R^3$. By (\ref{fiber_product_X+_eq}), the $L_q$ can be also viewed as the fiber product
of $L_{1,\hat q_1}$ and $L_{2,\hat q_2}$
where $\hat q_1=(q_1,q_3)$, $\hat q_2=(q_2,q_3)$, and
\begin{equation}
	\label{fiber_product_L_q_eq}
L_{i,\hat q_i}=\{(u_i,v_i)\in\mathbb C^2\mid \tfrac{1}{2}(|u_i|^2-|v_i|^2)=q_i, \quad   |u_iv_i-c_i|=\exp (q_3)\}
\end{equation}
Clearly, $(\pi_1,\pi_2)$ is the moment map of the Hamiltonian $T^2$-action given by
\begin{equation}
	\label{Ham_act_eq}
(e^{is}, e^{it})\cdot (u_1,u_2,v_1,v_2,z)\mapsto (e^{is} u_1, e^{it} u_2, e^{-is}v_1, e^{-it} v_2, z)
\end{equation}
The action degenerates when $u_1=v_1=0$ or $u_2=v_2=0$.
The \textit{discriminant locus} of $\pi$ is given by
$
\Delta=\Delta_1\cup \Delta_2
$
where
\begin{equation}
	\label{Delta_singular_locus_eq}
\Delta_1=\{0\}\times \mathbb R_{q_2} \times \{ \log c_1\} 
\quad \text{and} \quad
\Delta_2=\mathbb R_{q_1}  \times \{0\} \times \{\log c_2\}
\end{equation}
are two skew lines in $\mathbb R^3$.
Then, the smooth locus of $\pi$ is given by 
\[
B_0:=\mathbb R^3\setminus \Delta
\]
We also write $X_0=\pi^{-1}(B_0)$ and 
\begin{equation}
	\label{pi_0}
\pi_0:=\pi|_{B_0}:X_0\to B_0
\end{equation}
It is well-known that a Lagrangian torus fiber $L_q$ for $q=(q_1,q_2,q_3)\in B_0$ bounds a nontrivial Maslov-0 holomorphic disk in $X$ if and only if
$q_3=\log c_1$ or $q_3=\log c_2$ (see e.g. \cite[Proposition 2.2]{Chan_Pomerleano_Ueda}).
In other words, we have 4 components of the walls of Maslov-0 holomorphic disks as follows: (cf. Figure \ref{figure_singular_locus})
\begin{equation}
	\label{walls_H_ipm_eq}
H_{1\pm}
= \mathbb R_\pm \times \mathbb R \times \{\log c_1\}
\quad \text{and}\quad
H_{2\pm}
= \mathbb R \times \mathbb R_\pm \times \{\log c_2\}
\end{equation}
where we set $\mathbb R_+=(0,+\infty)$ and $\mathbb R_-=(-\infty, 0)$.
For later uses, we also introduce the notations:
\[
\bar H_{1\pm}
= \mathbb {\bar R}_\pm \times \mathbb R \times \{\log c_1\}
\quad \text{and}\quad
\bar H_{2\pm}
= \mathbb R \times \mathbb {\bar R_\pm} \times \{\log c_2\}
\]
where we set $\mathbb {\bar R}_+=[0,+\infty)$ and $\mathbb {\bar R}_-=(-\infty, 0]$.

If we define the divisors
$
D_{i-}=\{ u_i=0\} \quad \text{and} \quad D_{i+}=\{v_i=0\}
$
in $X$ for $i=1,2$, then we observe that for $i=1,2$, 
$ \pi(D_{i-})= \bar H_{i-} $, $\pi(D_{i+})=\bar H_{i+}$,
and
$\pi (D_{i+}\cap D_{i-})= \Delta_i$.

\subsection{Topological disks}
\label{ss_topological_disks}
We consider the following local systems over $B_0$:
\begin{equation}
	\label{local_system_H_2_H_1_eq}
	\mathscr R_1:=R^1\pi_*(\mathbb Z)\equiv \bigcup_{q\in B_0} \pi_1(L_q), \qquad  \mathscr R_2 :=\mathscr R_2(X'):=\bigcup_{q\in B_0} \pi_2( X',L_q)
\end{equation}
Abusing the notations, the fibers $\pi_1(L_q)$ and $\pi_2(X',L_q)$ of $\mathscr R_1$ and $\mathscr R_2$ actually denote the corresponding images of the Hurewicz maps in the (relative) homology groups $H_1(L_q)$ and $H_2(X',L_q)$ respectively rather than the homotopy groups. We apologize for the unusual notations, and we just attempt to avoid using $H_1(L_q)$ and $H^1(L_q)$ in the same time.

\begin{rmk}
	For our purpose, we must fully understand the monodromy of the local systems $\mathscr R_2$ and $\mathscr R_1$. The relevant literature, like \cite{Chan_Pomerleano_Ueda}, typically only addresses a single wall without delving into the monodromy. Therefore, we need to provide the complete details in the following text.
\end{rmk}

Let $\mathscr N_{i\pm}$ be small neighborhoods of $H_{i\pm}$ in $B_0$ for $i=1,2$.
The complement $B_0\setminus \bigcup_{i=1,2} H_{i+}\cup H_{i-}$ has three components: (cf. Figure \ref{figure_singular_locus})
\begin{align*}
B_1 
&= \{q\in B_0\mid \log c_1<q_3<+\infty \} \\
B_2
&= \{q\in B_0\mid  \log c_2<q_3<\log c_1\} \\
B_3
&=\{q\in B_0\mid -\infty <q_3<\log c_2\}
\end{align*}
in which we recall that $c_1>c_2>0$.
Then, we consider the slight thickenings of them as follows:
\begin{equation}
	\label{U_123_eq}
	\begin{aligned}
	U_1 
	&= B_1\cup \mathscr N_{1+} \cup \mathscr N_{1-} \\
	U_2
	&= B_2\cup \mathscr N_{2+}\cup\mathscr N_{2-} \\
	U_3
	&=B_3
	\end{aligned}
\end{equation}
The above three open subsets form a covering of $B_0$. Moreover, they are contractible over which the local systems $\mathscr R_1$ and $\mathscr R_2$ can be trivialized.

Let $q=(q_1,q_2,q_3)\in B_0$.
Fix $\pmb \beta\in\pi_2(X', L_q)$, and let $u:(\mathbb D,\partial\mathbb D)\to (X', L_q)$ be a \textit{topological} disk that represents $\pmb \beta$.
The map $u$ can be identified with the fiber product of two maps $u_i:(\mathbb D,\partial\mathbb D)\to (\mathbb C^2, L_{i, \hat q_i})$ for $i=1,2$. Set $\beta_i=[u_i]$, and we denote
$
\pmb \beta=(\beta_1,\beta_2)
$.

Now, we study the following three cases (cf. \cite[\S 2]{Chan_Pomerleano_Ueda}):

\begin{itemize}
	\item If $q\in B_3$, then for each $i=1,2$, the $L_{i,\hat q_i}$ is the Chekanov torus in $\mathbb C^2$. There is a preferred topological class $\hat\beta_i$ in $\pi_2(\mathbb C^2, L_{i,\hat q_i})$ of Maslov index 2. Denote by
	\[
	\pmb \beta_3 :=(\hat\beta_1,\hat\beta_2)
	\]
 the induced class in $\pi_2(X',L_q)$. Its open Gromov-Witten invariant is known to be $\mathsf n_{\pmb\beta_3}=1$.
	
	\item If $q\in B_2$, then $L_{1,\hat q_1}$ is of Chekanov type while $L_{2,\hat q_2}$ is of Clifford type.
	There are two Maslov-2 classes $\beta_{2\pm}$ in $\pi_2(\mathbb C^2, L_{2,\hat q_2})$ such that $\beta_{2+}\cdot \{v_2=0\}=1$ and $\beta_{2-}\cdot \{u_2=0\}=1$ in $\mathbb C^2_{u_2,v_2}$. Write
	\[
	\pmb \beta_{2+}:= (\hat\beta_1, \beta_{2+}) \quad \text{and} \quad \pmb \beta_{2-}:= (\hat\beta_1, \beta_{2-})
	\]
	The corresponding open Gromov-Witten invariants are also known: $\mathsf n_{\pmb \beta_{2\pm}}=1$.

	\item If $q\in B_1$, then for each $i=1,2$, the $L_{i, \hat q_i}$ is of Clifford type, and there are two Maslov-2 classes $\beta_{i\pm}$ in $\pi_2(\mathbb C^2, L_{i,\hat q_i})$ such that $\beta_{i+}\cdot \{v_i=0\}=1$ and $\beta_{i-}\cdot \{u_i=0\}=1$ in $\mathbb C^2_{u_i,v_i}$. We write
	\[
	\pmb\beta_{1++}:=(\beta_{1+},\beta_{2+}), \quad \pmb\beta_{1-+}:=(\beta_{1-},\beta_{2+}), \quad
	\pmb\beta_{1+-}:=(\beta_{1+},\beta_{2-}), \quad \pmb\beta_{1--}:=(\beta_{1-},\beta_{2-})
	\]
	Similarly, the corresponding open Gromov-Witten invariants are $\mathsf n_{\pmb \beta_{1\pm\pm}}=1$.
	It is standard that
	\[
	\pmb\beta_{1-+}-\pmb\beta_{1++} =\pmb\beta_{1--}-\pmb\beta_{1+-}
	\]
\end{itemize}

Next, we study the monodromy among all these topological disks. To see this, it suffices to study their relations over the small neighborhoods $\mathscr N_{i\pm}$.
	For (\ref{local_system_H_2_H_1_eq}) and (\ref{U_123_eq}), we can further regard the $\pmb \beta_3$ as a section of $\mathscr R_2$ over the contractible $U_3$, the $\pmb\beta_{2\pm}$ as sections over $U_2$, and the $\pmb\beta_{1\pm\pm}$ as sections over $U_1$.
	Beware that it is just in the topological level.
As in \cite{Chan_Pomerleano_Ueda}, one can check the following relations of these topological sections over the neighborhoods $\mathscr N_{i\pm}$ of the walls:
\begin{equation}
	\label{beta_32_eq}
	\begin{cases}
	\pmb \beta_3=\pmb \beta_{2+}   & \text{over} \ \mathscr N_{2+} \\
	\pmb \beta_3=\pmb \beta_{2-}	& \text{over} \ \mathscr N_{2-} \\
	\end{cases}
\end{equation}
\begin{equation}
	\label{beta_21_eq}
	\begin{cases}
		\pmb \beta_{2+} = \pmb \beta_{1++} & \text{over} \ \mathscr N_{1+}  \\
		\pmb \beta_{2+} = \pmb \beta_{1-+} & \text{over} \ \mathscr N_{1-}  \\
		\pmb \beta_{2-} = \pmb \beta_{1+-} & \text{over} \ \mathscr N_{1+}  \\
		\pmb \beta_{2-} = \pmb \beta_{1--} & \text{over} \ \mathscr N_{1-}  \\
	\end{cases}
\end{equation}

From now on, our convention is to write $E(\pmb \beta)=\frac{1}{2\pi} \int_{\pmb \beta}  \omega$ for a disk class $\pmb \beta$.

For (\ref{local_system_H_2_H_1_eq}), there is a natural boundary map $\partial:\mathscr R_2\to\mathscr R_1$ which is fiberwise $\pi_2(X',L_q)\to\pi_1(L_q)$.
Given $q\in B_0$ and $i=1,2$, we use $\sigma_i=\sigma_i(q)$ to denote the class of the orbit of the $i$-th $S^1$-component of the action in (\ref{Ham_act_eq}). They can be regarded as the global sections of $\mathscr R_1$.
Moreover, we can check the following agreements of topological sections of $\mathscr R_1$ (cf. \cite{Yuan_local_SYZ,Chan_Pomerleano_Ueda})
\begin{equation}
	\label{sigma_12_relations_eq}
\begin{cases}
	\sigma_2= \partial \pmb \beta_{2-}- \partial \pmb\beta_{2+}  & \text{over } \ U_2  \\
\sigma_2=	\partial \pmb \beta_{1+-}-\partial\pmb\beta_{1++} =\partial\pmb \beta_{1--}-\partial\pmb \beta_{1-+}   & \text{over } \ U_1 \\
\sigma_1 = \partial \pmb\beta_{1-+}  -  \partial\pmb\beta_{1++} =  \partial \pmb\beta_{1--} - \partial\pmb\beta_{1+-} 
& \text{over } \ U_1
\end{cases} 
\end{equation}
For example, we know that over $U_1$, 
\begin{equation}
	\label{sigma_1sigma_2_eq}
	\partial\pmb \beta_{1--}=\sigma_1+\partial\pmb \beta_{1+-}= \sigma_1+\sigma_2+\partial\pmb\beta_{1++}
\end{equation}

\subsection{Action coordinates}
\label{ss_action_coordinates}
Following \cite{Yuan_local_SYZ}, we specify the action coordinates by choosing the bases of $\pi_1(L_q)$'s and taking the flux maps.
Of course, one can intentionally make multiple different choices, but for clarity, we work with the fixed choices as follows:

\begin{itemize}
\item We choose a frame of $\mathscr R_1|_{U_3}$ to be the sections $\{\sigma_1,\sigma_2, \partial \pmb \beta_3 \}$.
Then, an induced integral affine coordinate chart is given by
\[
	\chi_3:  U_3 \to\mathbb R^3, \qquad  q=(q_1,q_2,q_3)\mapsto (q_1, q_2, \psi_3(q))
\]
where we write
\[
\textstyle
\psi_3(q)= \frac{1}{2\pi} \int_{\pmb \beta_3} \omega = E(\pmb \beta_3)
\]

\item We choose a frame of $\mathscr R_1|_{U_2}$ to be the sections $\{\sigma_1,\sigma_2, \partial \pmb\beta_{2+}\}$. Then, an induced integral affine coordinate chart is given by
\[
\chi_2:U_2\to\mathbb R^3, \qquad q=(q_1,q_2,q_3)\mapsto  (q_1, q_2, \psi_2(q))
\]
where we write
\[
\textstyle
\psi_2(q)= \frac{1}{2\pi} \int_{\pmb \beta_{2+}} \omega = E(\pmb \beta_{2+})
\]

\item We choose a frame of $\mathscr R_1|_{U_1}$ to be the sections $\{\sigma_1,\sigma_2, \partial \pmb\beta_{1++}\}$. Then, an induced integral affine coordinate chart is given by
\[
\chi_1:U_1\to\mathbb R^3, \qquad q=(q_1,q_2,q_3)\mapsto  (q_1, q_2, \psi_1(q))
\]
where we write
\[
\textstyle
\psi_1(q)= \frac{1}{2\pi} \int_{\pmb\beta_{1++}} \omega = E(\pmb \beta_{1++} )
\]
\end{itemize}

The first two coordinates $q_1,q_2$ of $\chi_k$ for $k=1,2,3$ can be made the same since they correspond to the moment map $(\pi_1,\pi_2)$ of the $T^2$-action in (\ref{Ham_act_eq}).
Recall that the symplectic form $\omega=d\lambda$ is the standard one.
Due to (\ref{sigma_12_relations_eq}), $q_2=\frac{1}{2\pi} \int_{\sigma_2}\lambda=\frac{1}{2\pi} \int_{\partial\pmb\beta_{2-}-\partial\pmb\beta_{2+}} \lambda $, and by Stokes' formula we get that
\[
\begin{cases}
	q_2 = E(\pmb\beta_{2-})-E(\pmb\beta_{2+})  & \text{over} \ U_2 \\
	q_2 = E( \pmb \beta_{1+-} ) -  E(\pmb\beta_{1++}) =E(\pmb \beta_{1--}) -  E(\pmb \beta_{1-+})   & \text{over } \ U_1
\end{cases}
\]
Similarly, we also conclude that
\[
q_1=E(\pmb\beta_{1-+})-E(\pmb\beta_{1++})=E(\pmb\beta_{1--})-E(\pmb\beta_{1+-}) \qquad \text{over} \ U_1
\]
For example, we know that 
\begin{equation}
	\label{q1+q2_eq}
E(\pmb\beta_{1--})=q_1+E(\pmb\beta_{1+-})=q_1+q_2+E(\pmb\beta_{1++})
\end{equation}
over $U_1$.
Applying (\ref{beta_32_eq}) and (\ref{beta_21_eq}) further deduces the integral affine transformations among the $\chi_k$'s.
Specifically, we can check that
\begin{equation}
	\label{psi_32_eq}
	\begin{cases}
		\psi_3(q)=\psi_2(q)   & \text{over} \ \mathscr N_{2+} \\
		\psi_3(q)=\psi_2(q)+q_2	& \text{over} \ \mathscr N_{2-} \\
	\end{cases}
\end{equation}
\begin{equation}
	\label{psi_21_eq}
	\begin{cases}
		\psi_2(q) = \psi_1(q) & \text{over} \ \mathscr N_{1+}  \\
		\psi_2(q) = \psi_1(q)+q_1  & \text{over} \ \mathscr N_{1-}  \\
	\end{cases}
\end{equation}
In other words,
\begin{align*}
	\psi_3(q)=\psi_2(q)+\min\{0,q_2\}  \qquad & \text{over} \ \mathscr N_{2+}\cup \mathscr N_{2-} \\
	\psi_2(q)=\psi_1(q)+\min\{0,q_1\} \qquad & \text{over} \ \mathscr N_{1+}\cup\mathscr N_{1-}
\end{align*}
Therefore, we are able to define a smooth function on $B_0=U_1\cup U_2\cup U_3=\mathbb R^3\setminus \Delta$ as follows
\begin{equation}
	\label{psi_123_together_eq}
	\psi=\psi(q)
	=
	\begin{cases}
		\psi_1(q)+\min\{0,q_1\}+\min\{0,q_2\} & \text{over} \ U_1 \\
		\psi_2(q)+\min\{0,q_2\}  	& \text{over} \ U_2 \\
		\psi_3(q) 							& \text{over} \ U_3
	\end{cases}
\end{equation}
It extends to a continuous map on $B=\mathbb R^3$.
For latter uses, we also set
\begin{equation}
		\label{psi_12_eq}
	\begin{aligned}
\psi^{(1)}&=\psi^{(1)} (q_1,q_2)=\psi(q_1,q_2, \log c_1)  \\
\psi^{(2)}&=\psi^{(2)} (q_1,q_2)=\psi(q_1,q_2,\log c_2)
\end{aligned}
\end{equation}
As in \cite{Yuan_local_SYZ}, it is standard to check that for any fixed $q_1, q_2$,
\begin{equation}
	\label{monotone_function_eq}
q_3\mapsto \psi(q)=\psi(q_1,q_2,q_3)
\end{equation}
is an increasing diffeomorphism from $\mathbb R$ to $(0,+\infty)$.
In particular, since $c_1>c_2>0$, we have $\psi^{(1)}>\psi^{(2)}>0$.

\subsection{Topological embedding map $j$}
\label{ss_j}
To visualize the induced integral affine structure on $B=\mathbb R^3$ (with singularities), we define a topological embedding
\begin{equation}
	\label{j_eq}
	j: B=\mathbb R^3\to\mathbb R^5 \qquad q=(q_1,q_2,q_3) \mapsto (\theta_1(q), \theta_2(q) , \vartheta(q),  q_1, q_2)
\end{equation}
where
\begin{equation}
	\label{theta_eq}
	\begin{aligned}
		\theta_1(q)&=\min\{ -\psi(q), -\psi^{(1)}(q_1,q_2)\} +\min\{0,q_1\}+\min\{0,q_2\} \\
		\theta_2(q)&=\min\{ \ \ \ \psi(q), \ \ \ \psi^{(2)}(q_1,q_2) \} \\
		\vartheta(q)&=   \median\{ \ \ \ \psi , \ \ \ \psi^{(1)}(q_1,q_2), \ \ \  \psi^{(2)}(q_1,q_2) \}
	\end{aligned}
\end{equation}
where $\median\{a,b,c\}$ denotes the median of three real numbers $a,b,c$.
Note that the above formulas implicitly rely on the condition $c_1>c_2>0$; in other words, if we had $c_1<c_2$, the formulas should be changed accordingly.

\begin{rmk}
	As we mentioned in Remark \ref{j_minor_role}, the role of $j$ is relatively minor. We recommend skipping this section about $j$ during the first reading and proceeding to the later sections discussing the tropically continuous fibration $F$. To some extent, the above formulas for $j$ are inspired by the computations surrounding $F$. Once we understand how the image of $F$ looks, discovering the appropriate formula for $j$ should become clearer.
\end{rmk}

Now, we use the various slices of $j$ fixing $q_1$ and $q_2$ to describe the image of $j$.
We consider the following map
\begin{equation}
	\label{r_eq}
r_{q_1,q_2}:   \mathbb R \to \mathbb R^3
\end{equation}
defined by
\[
t\mapsto 
\Big(
	\min\{-t, -\psi^{(1)} \}   + m ,\quad
	\min\{t, \psi^{(2)} \}  ,\quad
	\median\{t , \psi^{(1)} ,\psi^{(2)} \}
\Big)
\]
where we temporarily write 
\begin{equation}
	\label{m_eq}
	m:=m(q_1,q_2):= \min\{0,q_1\}+\min\{0,q_2\}
\end{equation}
In the remaining of this section, we shrink the domain of $r_{q_1,q_2}$ from $\mathbb R$ to $(0,+\infty)$ in view of (\ref{monotone_function_eq}).

The image of $r_{q_1,q_2}$ is an open broken line in $\mathbb R^3$ consisting of three linear components
\begin{equation}
	\label{r_cases_formula_eq}
R_{q_1,q_2}:  \qquad r_{q_1,q_2}(t)=
\begin{cases}
	(-\psi^{(1)}+ m, \  \  t, \ \ \psi^{(2)})  & \text{if} \ 0<t < \psi^{(2)} <\psi^{(1)} \\
	(-\psi^{(1)}+ m, \  \  \psi^{(2)}, \ \  t)  & \text{if} \ 0<\psi^{(2)}  \le t\le \psi^{(1)} \\
	(- t +m, \ \ \psi^{(2)}, \ \ \psi^{(1)} ) & \text{if} \ 0<\psi^{(2)}<\psi^{(1)}<t
\end{cases}
\end{equation}
with the two corner points when $t=\psi^{(2)}, \psi^{(1)}$:
\begin{equation}
	\label{corner_point_eq}
	\begin{aligned}
A& :=A(q_1,q_2) :=(-\psi^{(1)}+m, \psi^{(2)},\psi^{(2)} )  \\
A' & :=A'(q_1,q_2) := (-\psi^{(1)}+m, \psi^{(2)} ,\psi^{(1)} )
\end{aligned}
\end{equation}

On the other hand, we recall that the singular locus in $B=\mathbb R^5$ is given by $\Delta=\Delta_1\cup \Delta_2$ in (\ref{Delta_singular_locus_eq}). 
Then, we have that
\begin{equation}
	\label{j(Delta)_eq}
	\begin{aligned}
		j(\Delta_1) &= \{(A'(0,q_2), 0, q_2) \mid q_2\in\mathbb R \} \\
		j(\Delta_2) &= \{(A(q_1,0), q_1,0) \mid q_1\in\mathbb R\}
	\end{aligned}
\end{equation}

\section{Family Floer T-duality construction}
\label{s_mirror_construction}

\subsection{General aspects of non-archimedean geometry}
\label{ss_NA_general}

The \textit{Novikov field}
\[
\textstyle
\Lambda=\mathbb C((T^{\mathbb R}))=\{ \sum_{i=0}^\infty a_i T^{\lambda_i} \mid a_i\in\mathbb C, \lambda_i \to \infty\}
\]
is a non-archimedean field with the canonical valuation map
\begin{equation}
	\label{val_eq}
	\mathsf v: \Lambda\to\mathbb R\cup\{+\infty\}
\end{equation}
Consider the \textit{tropicalization map}
\[
\trop: (\Lambda^*)^n\to\mathbb R^n, \qquad (y_i)\mapsto (\val(y_i))
\]
The $(\Lambda^*)^n$ should be more precisely regarded as the non-archimedean analytification of the algebraic torus $\Spec \Lambda[[x_1^\pm,\dots, x_n^\pm]]$, but for clarity, we often make this point implicit (cf. \cite{Yuan_local_SYZ}).
In general, every algebraic variety $Y$ over a non-archimedean field admits an analytification space $Y^{\mathrm{an}}$.

Following Kontsevich-Soibelman \cite{KSAffine}, we study the notion of affinoid torus fibration, an analog of the integrable system in the non-archimedean analytic setting. Let $\mathcal Y$ be an analytic space over $\Lambda$ of dimension $n$, and let $B$ be an $n$-dimensional topological manifold or a CW complex. Let $f:\mathcal Y\to B$ be a proper continuous map for the analytic topology and the topology on the base. We call a point $p\in B$ \textit{smooth} (or \textit{$f$-smooth}) if there is a neighborhood $U$ of $p$ in $B$ so that the fibration $f^{-1}(U)\to U$ is isomorphic to $\trop^{-1}(V)\to V$ for some $V\subset \mathbb R^n$ that covers a homeomorphism $U\cong V$.
We also call $f^{-1}(V)$ an \textit{affinoid tropical chart}.
All other points are called \textit{singular}.
If all points of $B$ are smooth, then we call $f$ an \textit{affinoid torus fibration} (cf. \cite{NA_nonarchimedean_SYZ}).
If not, we denote by $B_0$ the set of all smooth points and call it the \textit{smooth locus} of $f$ that admits a natural integral affine structure \cite[\S 4]{KSAffine}.


Let $\mathcal Y$ be a Berkovich analytic space over a non-archimedean field and let $\mathcal B$ be a topological manifold of dimension $m$.
Let $F:\mathcal Y\to  \mathcal B$ be a continuous map with respect to the Berkovich topology on $\mathcal Y$ and Euclidean topology on $\mathcal B$.

\begin{defn}
	\label{tropically_continuous_defn}
	We say $F$ is \textit{tropically continuous} if for any point $x$ in $\mathcal Y$, there exist non-zero rational functions $f_1,\dots, f_N$ on an analytic neighborhood $\mathcal U$ of $x$, and there exists a continuous map $\varphi:U\to\mathbb R^m$ on an open subset $U\subset [-\infty, +\infty]^N$ such that
	\[
	F|_{\mathcal U} = \varphi(\val(f_1),\dots, \val(f_N))
	\]
\end{defn}


\begin{rmk}
	Following the definition by Chambert-Loir and Ducros in their work \cite[(3.1.6)]{Formes_Chambert_2012} closely, the functions $f_1, \dots, f_N$ should be invertible analytic functions. However, we relax the requirement to only nonzero rational functions, aligning more with Kontsevich-Soibelman's work \cite[\S 4.1]{KSAffine}. Indeed, we need to further include a continuous non-smooth function $\varphi: U \to \mathbb{R}^m$. This is inevitable for the reduced K\"ahler structure exhibits non-smooth points when moving the phase $s$ of the moment map. This extra function $\varphi$ is presented in \cite{Formes_Chambert_2012} but not in \cite{KSAffine}.
\end{rmk}

\subsection{Family Floer mirror construction: a quick review}
\label{ss_family_Floer_text}

In this paper, we only state the consequences directly for clarity.
We refer to \cite{Yuan_local_SYZ} for a concise survey and to \cite{Yuan_I_FamilyFloer} for the full details of the family Floer mirror construction. We also refer to \S \ref{ss_outline_intro} for an outline of the story.

Let $(X,\omega)$ be a symplectic manifold of real dimension $2n$ which is closed or convex at infinity. Suppose there is a smooth proper Lagrangian torus fibration $\pi_0:   X_0 \to B_0$ in an open subset $X_0$ of $X$ over an $n$-dimensional base manifold $B_0$.
There exists an integral affine structure on $B_0$ by Arnold-Liouville theorem.
Note that the quantum correction of holomorphic disks bounded by $\pi_0$-fibers is of global nature and sweep not just in $X_0$ but in $X$. Thus, we think of the pair $(X,\pi_0)$.
Let $L_q=\pi_0^{-1}(q)$ denote the Lagrangian fiber over $q$. Let $U_\Lambda$ be the \textit{unitary Novikov group}.

Assume that every Lagrangian fiber has zero Maslov classes and is preserved by an anti-symplectic involution. Although we may further weaken the assumption, it is already enough for our purpose.

\begin{thm}\hspace{-0.2em}\emph{\cite{Yuan_I_FamilyFloer}}
	\label{Main_theorem_thm}
	\ \ 
	We can associate to the pair $(X,\pi_0)$ a triple
	$
	\mathbb X^\vee:=(X_0^\vee,W_0^\vee, \pi_0^\vee)
	$
	consisting of a non-archimedean analytic space $X_0^\vee$ over $\Lambda$, a global analytic function $W_0^\vee$, and a dual affinoid torus fibration $\pi_0^\vee: X_0^\vee\to B_0$
	such that the following properties hold:

	\begin{enumerate}[i)]
		\itemsep 0pt
		\item The analytic structure of $X_0^\vee$ is unique up to isomorphism.
		\item The integral affine structure on $B_0$ from $\pi_0^\vee$ coincides with the one from the fibration $\pi_0$.
		\item The set of closed points in $X_0^\vee$ coincides with the disjoint union
		\begin{equation}
			\label{union_mirror_intro}
			R^1\pi_{0*}(U_\Lambda)\equiv \bigcup_{q\in B_0} H^1(L_q; U_\Lambda)
		\end{equation}
		of the sets of local $U_\Lambda$-systems on the $\pi_0$-fibers,
		and the map $\pi_0^\vee$ sends every $H^1(L_q; U_\Lambda)$ to $q$.
	\end{enumerate}
\end{thm}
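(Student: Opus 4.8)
The plan is to reconstruct the triple $\mathbb X^\vee=(X_0^\vee,W_0^\vee,\pi_0^\vee)$ by the family Floer algorithm of \cite{Yuan_I_FamilyFloer}, of which this statement is a summary; here is the strategy in outline. First I would build the local charts. Fix $q_0\in B_0$; by the Arnold--Liouville theorem choose a contractible neighborhood $U$ carrying action coordinates that identify $U$ with an open $V\subset\mathbb R^n$ and trivialize $R^1\pi_{0*}(\mathbb Z)$ over $U$ by an integral basis, so that $H^1(L_q;U_\Lambda)\cong (U_\Lambda)^n$ for $q\in U$. Since every fiber has zero Maslov class and is preserved by an anti-symplectic involution, the Fukaya $A_\infty$-algebra of a pair $(L_q,b)$ with $b$ a unitary local system has curvature $m_0$ proportional to the unit, so each such pair is a genuine object; assembling them, the pair $(q,b)$ with $b=(u_1,\dots,u_n)\in(U_\Lambda)^n$ is sent to the point $y\in(\Lambda^*)^n$ with $y_i=T^{\langle q,e_i^*\rangle}u_i$, whose image is the affinoid domain $\trop^{-1}(V)$ and for which $\trop(y)$ recovers $q$. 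On each chart one also records the disk potential $W_q=\sum_{\beta}\mathsf n_\beta\,T^{E(\beta)}y^{\partial\beta}$, the sum over Maslov-$2$ classes of open Gromov--Witten counts.

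Next I would glue. On an overlap $U\cap U'$ the two trivializations differ by a monodromy matrix in $GL_n(\mathbb Z)$ and the action coordinates by a translation (a flux); the naive transition is the corresponding integral monomial map twisted by a power of $T$, but the correct transition is a Floer-theoretic deformation of it --- a power series whose higher-order terms count the Maslov-$0$ disks bounded by the fibers, extracted from the $A_\infty$ continuation morphisms between the Fukaya algebras over $U$ and over $U'$ (``Fukaya's trick''). Gromov compactness forces these series to converge over the Novikov field $\Lambda$, so the transitions are analytic isomorphisms of affinoid domains, and the local potentials $W_q$ are carried into one another by the same corrections.

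The crux is coherence: the transition maps must satisfy the cocycle condition on triple overlaps, which forces one to organize the family of $A_\infty$-algebras, their homotopy units, and the continuation morphisms into a homotopy-coherent datum so that the induced gluing maps are strictly compatible. Granting this, glue the charts $\trop^{-1}(V)$ into a non-archimedean analytic space $X_0^\vee$ of dimension $n$; the maps $\trop$ glue to a proper continuous $\pi_0^\vee:X_0^\vee\to B_0$, which is by construction an affinoid torus fibration, and the $W_q$ glue to a global analytic function $W_0^\vee$. The three asserted properties then follow: (i) different atlases, trivializations and perturbation data give $A_\infty$-equivalences, hence analytic isomorphisms, so $X_0^\vee$ is unique up to isomorphism; (iii) the closed points of each chart are exactly $\{(q,b):q\in U,\ b\in H^1(L_q;U_\Lambda)\}$ and the gluing identifies these, so the closed-point set is $R^1\pi_{0*}(U_\Lambda)$ with $\pi_0^\vee(q,b)=q$; (ii) the integral affine structure from $\pi_0^\vee$ is generated by the $\trop$-charts, and since each chart identification $U\cong V$ is the action-coordinate chart of $\pi_0$ while the transition corrections have strictly larger $T$-order than their monomial leading terms --- so that $\trop$ of a transition is exactly the monodromy plus translation --- it coincides with the Arnold--Liouville structure of $\pi_0$.

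The main obstacle is the coherent gluing step. It is here that the full homotopy-theoretic family Floer package of \cite{Yuan_I_FamilyFloer} is needed: controlling the $A_\infty$-algebras and their morphisms uniformly in $q$, dealing with homotopy units, and checking that the non-compactness of $X$ (disks escaping $X_0$ into $X$) spoils neither the convergence nor the cocycle identity. Given this machinery, the remaining steps --- recognizing the local model as $\trop^{-1}(V)\to V$, identifying the closed points, and matching the affine structures via symplectic areas --- are comparatively formal.
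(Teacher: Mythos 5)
The paper does not prove Theorem~\ref{Main_theorem_thm} at all: it is imported as a black box from the author's foundational work \cite{Yuan_I_FamilyFloer}, with \S\ref{ss_family_Floer_text} only giving a high-level description (Arnold--Liouville charts glued by Maslov-$0$ wall-crossing corrections, following Fukaya and Tu's Maurer--Cartan matching). Your outline is a reasonable reconstruction of that cited construction and correctly identifies the homotopy-coherence of continuation maps on overlaps as the genuine technical crux, so it matches the paper's stance in deferring the hard part to \cite{Yuan_I_FamilyFloer}.
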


To sidestep technical intricacies, this paper essentially adopts a practical viewpoint, emphasizing that once the correct algorithm is decided, it can be employed directly to generate examples and applications. 
This does not imply an absence of Floer-theoretic basis, but our priority is to ensure that the main result of this paper is essentially comprehensible without any prior specialized knowledge. This allows a broader audience to engage with the content.
In light of this, our review here primarily focuses on the algorithm. However, we certainly welcome readers who wish to delve into the Floer-theoretic foundational details in \cite{Yuan_I_FamilyFloer}.
See also \cite{Yuan_c_1,Yuan_affinoid_coeff,Yuan_e.g._FamilyFloer}.

Specifically, the algorithm goes as follows. We cover the smooth locus $B_0$ by small contractible open subsets $U_i$'s where $i\in \mathcal  I$ runs over some index set $\mathcal I$. We choose a point $q_i$ near $U_i$; note that $q_i\notin U_i$ is allowed. 
For the integral affine structure on $B_0$ induced by the Lagrangian fibration $\pi_0$, we can pick a \textit{pointed} integral affine coordinate chart
\[
\chi_i :(U_i, q_i) \xrightarrow{\cong} (V_i, c_i) \subseteq \mathbb R^n
\]
with $\chi_i(q_i)=c_i$. The local aspect of the dual affinoid torus fibration $\pi_0^\vee$ is represented by the following identification
\[
\tau_i :  (\pi_0^\vee)^{-1} (U_i) \xrightarrow{\cong} \trop^{-1}(V_i - c_i)
\]
for the aforementioned tropicalization map.
Now, Theorem \ref{Main_theorem_thm} means a canonical local-to-global gluing process that combines these local structures into the mirror space:
\[
X_0^\vee = \bigcup_{i\in I} \trop^{-1}(V_i-c_i) / \sim
\]
where $\sim$ indicates the gluing relation exclusively determined by Maslov index 0 holomorphic disks bounded by Lagrangian fibers, detailed in \cite{Yuan_I_FamilyFloer}.

The gluing relation is generally implicit; however, in this paper, detailed knowledge of places of Maslov-0 holomorphic disks and the symmetry of the Lagrangian fibration employed allow for explicit characterization.

\subsection{Void wall-crossing and explicit description}
Let $B_1\subset B_0$ be a contractible open set.
Let $B_2=\{x\in B_0\mid \dist (x, B_1) < \epsilon\}$ be a slight thickening of $B_1$ in $B_0$. We assume it is contractible and $\epsilon>0$ is sufficiently small.
Then, we have:

\begin{prop}
	\label{void_wall_cross_prop}
	Let $\chi: B_2\xhookrightarrow{} \mathbb R^n$ be an integral affine coordinate chart. If each Lagrangian fiber $L_{q}$ for $q\in B_1$ bounds no non-constant Maslov-0 holomorphic disk, then there is an affinoid tropical chart $(\pi_0^\vee)^{-1} (B_2) \cong \trop^{-1}(\chi(B_2))$.
\end{prop}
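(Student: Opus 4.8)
The plan is to derive the statement from Theorem~\ref{Main_theorem_thm} by inspecting how the family Floer space $X_0^\vee$ is assembled, the hypothesis being used exactly to force every gluing (wall-crossing) transformation over $B_2$ to be affine, so that $(\pi_0^\vee)^{-1}(B_2)\to B_2$ becomes, after applying the chart $\chi$, literally the standard fibration $\trop^{-1}(\chi(B_2))\to\chi(B_2)$. Before anything else I would record a small but necessary reduction: for $\epsilon$ small enough, \emph{no} Lagrangian fiber $L_q$ with $q\in B_2$ bounds a non-constant Maslov-0 holomorphic disk. This follows from Gromov's monotonicity (a non-constant holomorphic disk has symplectic area bounded below by a positive constant depending only on the geometry over a fixed compact neighbourhood of $\overline{B_1}$) together with Gromov compactness, which make the locus of fibers bounding such a disk closed; being disjoint from $B_1$, it stays a definite distance away, so any smaller $\epsilon$ works. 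This is what ``$\epsilon>0$ sufficiently small'' is for, and from here on I may treat $B_2$ itself as wall-free.

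Next I would set up the candidate identification on underlying sets. Since $B_2$ is contractible, $\mathscr R_1=R^1\pi_{0*}(\mathbb Z)$ and $R^1\pi_{0*}(U_\Lambda)$ trivialize over it; fix a frame $e_1,\dots,e_n$ of $\mathscr R_1|_{B_2}$ whose flux coordinates are the components of $\chi$, i.e.\ so that $\chi$ is the action chart attached to this frame in the sense of \S\ref{ss_action_coordinates}. By Theorem~\ref{Main_theorem_thm}(iii) the closed points of $(\pi_0^\vee)^{-1}(B_2)$ are the pairs $(q,\rho)$ with $q\in B_2$ and $\rho\in H^1(L_q;U_\Lambda)$, with $\pi_0^\vee(q,\rho)=q$. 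I would then define
\[
\Phi:\;(\pi_0^\vee)^{-1}(B_2)\;\longrightarrow\;\trop^{-1}\bigl(\chi(B_2)\bigr)\subset(\Lambda^*)^n,\qquad (q,\rho)\;\longmapsto\;\bigl(T^{\chi(q)_1}\rho(e_1),\,\dots,\,T^{\chi(q)_n}\rho(e_n)\bigr)
\]
which by construction satisfies $\trop\circ\Phi=\chi\circ\pi_0^\vee$ and is a bijection covering the homeomorphism $\chi\colon B_2\cong\chi(B_2)$. The task is then to upgrade $\Phi$ to an isomorphism of $\Lambda$-analytic spaces.

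The heart of the argument is to show that the analytic gluing data of $X_0^\vee$ is purely affine over $B_2$. I would unwind, at the level recalled in \S\ref{ss_family_Floer_text} and following \cite{Yuan_I_FamilyFloer}, that $X_0^\vee$ is built by covering $B_0$ with small contractible opens $P_\alpha$ carrying integral affine charts $\phi_\alpha$, taking as local models the affinoid pieces produced from the weakly unobstructed Maurer--Cartan spaces of reference fibers, and gluing them over overlaps by analytic isomorphisms $\Psi_{\beta\alpha}$ whose tropicalization is the affine transition $\phi_\beta\circ\phi_\alpha^{-1}$ and whose remaining ``unipotent'' part is the Floer-theoretic parallel transport between reference fibers --- the composite of the wall-crossing transformations generated by the Maslov-0 holomorphic disks bounded by fibers over the overlap. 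Restricting the cover to the $P_\alpha$ meeting $B_2$: by the reduction above every such fiber bounds only constant Maslov-0 disks; recall that only Maslov-0 disks enter the gluing of $X_0^\vee$ (Maslov-2 disks feed the superpotential $W_0^\vee$ but not the analytic structure), and that, despite the globally generated nature of the corrections carried by $X_0^\vee$, the transition maps relevant over $B_2$ involve only disks bounded by $B_2$-fibers. Hence each $\Psi_{\beta\alpha}$ over $B_2$ collapses to the bare monomial map induced by $\phi_\beta\circ\phi_\alpha^{-1}$ and the comparison of frames. Since $B_2$ is contractible and carries an honest (singularity-free) integral affine structure, these affine transitions have trivial monodromy, so the single chart $\chi$ trivializes all of them simultaneously; it follows that $\Phi$ is an isomorphism of analytic spaces over $\chi\colon B_2\cong\chi(B_2)$, exhibiting $(\pi_0^\vee)^{-1}(B_2)$ as an affinoid tropical chart in the sense of \S\ref{ss_NA_general}.

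The step I expect to be the main obstacle is this last one: extracting cleanly from \cite{Yuan_I_FamilyFloer} the precise assertion that, over a contractible region free of non-constant Maslov-0 disks, the local Floer models fill out $\trop^{-1}$ of the affine image and all parallel-transport / gluing isomorphisms reduce to the naive affine ones. Once that input is granted, the monodromy-killing, the Gromov-compactness reduction, and the bookkeeping around $\Phi$ are all routine.
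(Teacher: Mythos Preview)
The paper does not actually prove Proposition~\ref{void_wall_cross_prop}: it is stated without proof in \S\ref{ss_family_Floer_text} as a consequence of the foundational construction in \cite{Yuan_I_FamilyFloer}, to which the paper explicitly defers (``In this paper, we only state the consequences directly for clarity. We refer to \cite{Yuan_local_SYZ} for a concise survey and to \cite{Yuan_I_FamilyFloer} for the full details''). So there is nothing to compare your argument against in this paper; you are attempting to supply what the paper deliberately black-boxes.

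That said, your sketch is the right shape. The Gromov-compactness reduction to make $B_2$ itself wall-free is correct and is indeed the purpose of the ``$\epsilon$ sufficiently small'' clause. Your candidate map $\Phi$ is exactly the map written down in (\ref{tau_123_eq}) for the concrete charts used later in the paper, and your heuristic ``no Maslov-0 disks $\Rightarrow$ the unipotent part of the transition maps vanishes $\Rightarrow$ only the monomial/affine part survives'' is the intended mechanism. You correctly identify the genuine obstacle yourself: the assertion that over a wall-free contractible region the local Floer models glue by the naive affine maps is not something you can extract from Theorem~\ref{Main_theorem_thm} alone---items (i)--(iii) there describe the output of the construction but do not expose enough of its internals to read off the transition maps. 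For that you must go into \cite{Yuan_I_FamilyFloer} and verify that the Fukaya-trick/pseudo-isotopy comparison between nearby fibers contributes only the identity when every fiber along the path bounds no non-constant Maslov-0 disk. This is true, but it is a statement about the construction, not a corollary of the theorem as quoted here; your proof as written is therefore a plausible outline rather than a self-contained argument.
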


\begin{proof}
	Firstly, as $B_2$ is contractible, we may identify a fixed pointed integral affine chart $\chi: (B_2, q_0) \to (V, c) \subset \mathbb R^n$ at some point $q_0 \in B_2$. Next, we cover $B_2$ with sufficiently small pointed integral affine coordinate charts $\chi_i: (U_i, q_i) \to (V_i, v_i)$ where $i \in \mathcal{I}$. We may assume all $q_i$'s are within $B_1$, with no Maslov-0 disks along any Lagrangian isotopy among the fibers over $q_i$'s inside $B_1$.
	As above, we have multiple affinoid tropical charts $\tau_i: (\pi_0^\vee)^{-1}(U_i) \cong \trop^{-1}(V_i - v_i)$. Due to the absence of Maslov-0 holomorphic disks along Lagrangian isotopies among the fibers over $q_i$'s, the gluing maps for $\sim$ take the simplest form of the translation $y_k \mapsto T^{c_k} y_k$, $k=1,2,\dots, n$, within $(\Lambda^*)^n$ where $c_k$'s are some real numbers. Ultimately, it allows us to form a single affinoid tropical chart by merging all these $\tau_i$'s.
\end{proof}

By Theorem \ref{Main_theorem_thm}, we denote the mirror triple associated to $(X', \pi_0)$ in (\ref{pi_0}) by $(X_0^\vee, W_0^\vee, \pi_0^\vee)$.
Replacing $X'$ by $X$, we remark that the mirror associated to $(X,\pi_0)$ is simply the same $(X_0^\vee,  \pi_0^\vee)$ with the vanishing superpotential.
For the integral affine charts $\chi_k$'s in \S \ref{ss_action_coordinates}, using Proposition \ref{void_wall_cross_prop} yields three affinoid tropical charts as follows:
\begin{equation}
	\label{tau_123_eq}
	\begin{aligned}
\tau_1 : (\pi_0^\vee)^{-1}(U_1) \to (\Lambda^*)^3 , \qquad
\mathbf y & \mapsto  \big(T^{q_1} \mathbf y^{\sigma_1}, T^{q_2} \mathbf y^{\sigma_2}, T^{\psi_1(q)} \mathbf y^{\partial {\pmb \beta}_{1++}} \big) 
\\
\tau_2 : (\pi_0^\vee)^{-1}(U_2) \to (\Lambda^*)^3 , \qquad
\mathbf y & \mapsto  \big(T^{q_1} \mathbf y^{\sigma_1}, T^{q_2} \mathbf y^{\sigma_2}, T^{\psi_2(q)} \mathbf y^{\partial {\pmb \beta}_{2+}} \big) \\
\tau_3 : (\pi_0^\vee)^{-1}(U_3) \to (\Lambda^*)^3 , \qquad
\mathbf y & \mapsto  \big(T^{q_1} \mathbf y^{\sigma_1}, T^{q_2} \mathbf y^{\sigma_2}, T^{\psi_3(q)} \mathbf y^{\partial {\pmb \beta}_{3}} \big)
\end{aligned}
\end{equation}
where we use the natural pairing map
\[
H^1(L_q; U_\Lambda) \times \pi_1(L_q) \to U_\Lambda  \qquad (\mathbf y, \alpha)\mapsto \mathbf y^{\alpha}
\]
Recall that $U_\Lambda=\{x\in\Lambda\mid |x|=1\}=\{x\in\Lambda\mid \val(x)=0\}$.
Then, one can verify that 
\[\trop (\tau_k(\mathbf y))= \big(\val(T^{q_1}), \val(T^{q_2}), \val(T^{\psi_k(q)}) \big) =(q_1,q_2,q_3)=\chi_k(q) =\chi_k(\pi_0^\vee(\mathbf y))
\]
Their images are the analytic open domains in $(\Lambda^*)^3$, denoted by:
\begin{equation}
\label{T_k_eq}
T_k:=\trop^{-1}(\chi_k(U_k))
\end{equation}
for $k=1,2,3$.
In other words, $T_k\cong (\pi_0^\vee)^{-1}(U_k)$ via $\tau_k$.
The counts of the Maslov-2 holomorphic disks are well-known (cf. \S \ref{ss_topological_disks}), from which one can check that the restrictions $\mathcal W_k$ of $W_0^\vee$ over $T_k$ for $k=1,2,3$ are given by
\begin{align*}
	\mathcal W_1 &= y_3(1+y_1)(1+y_2)		\\
	\mathcal W_2 &= y_3(1+y_2)	\\
	\mathcal W_3 &= y_3		
\end{align*}
Here we use $y_1,y_2,y_3$ to denote the natural coordinates in $(\Lambda^*)^3$. We may apply (\ref{sigma_1sigma_2_eq}) and (\ref{q1+q2_eq}) here.

Denote by $\Phi_{12}$ (resp. $\Phi_{23}$) the corresponding analytic transition map from the chart $T_1$ to $T_2$ (resp. from $T_2$ to $T_3$).
Then, the \textit{family Floer mirror analytic spac}e $X_0^\vee$ in our specific conifold example is simply the gluing of there three charts
\begin{equation}
	\label{T_123_glue}
X_0^\vee \equiv T_1\sqcup T_2\sqcup T_3 /\sim
\end{equation}
where $\sim$ denotes the gluing relation induced by $\Phi_{12}$ and $\Phi_{23}$.

Due to the $T^2$-symmetry (\ref{Ham_act_eq}), we have that $\Phi_{12}$ and $\Phi_{23}$ both keep the first two coordinates $y_1$ and $y_2$. Besides, the existence of the global superpotential $W_0^\vee$ implies that $\mathcal W_1=\mathcal W_2 \circ \Phi_{12}$ and $\mathcal W_2=\mathcal W_3\circ \Phi_{23}$, and we can finally deduce that
\begin{align*}
	\Phi_{12} (y_1,y_2,y_3) &=  (y_1,y_2, y_3(1+y_1) )     \\
	\Phi_{23} (y_1,y_2,y_3) &=  (y_1,y_2, y_3(1+y_2) )
\end{align*}
Note that the integral affine structure on $B_0$ is exhibited as we identify the canonical dual affinoid torus fibration $\pi_0^\vee$ as the composition of three local segments of the tropicalization maps $\trop: T_k\to \chi_k(U_k)$.

\begin{rmk}
	Here we may regard $y_i$'s as the right-hand side of (\ref{tau_123_eq}), but they depend on the charts $\chi_k$'s or $\tau_k$'s. Moreover, if we choose different basis other than $\{\sigma_1,\sigma_2,\partial\pmb\beta_k\}$, the above expressions of $\mathcal W_k$'s and $\Phi_{ij}$'s will be all different accordingly. However, up to isomorphism, they will be the same.
\end{rmk}


\section{Tropically continous fibration on the resolved conifold: B side}
\label{s_B_side}

As in (\ref{Y_mirror_alg_var_eq_intro}), we consider the analytification $Y^{\mathrm{an}}$ of the following algebraic variety
\begin{equation}
	\label{Y_mirror_alg_var_eq}
	Y= \left\{ (x_1, x_2, z, y_1,y_2)\in\Lambda^2\times \mathbb P_\Lambda \times (\Lambda^*)^2  \ \  \mid \ \  
	\begin{aligned}
		x_1 z &=1+y_1 \\
		x_2 &=(1+y_2) z
	\end{aligned}
	\right\}
\end{equation} 
From now on, abusing the notation, the analytification $Y^{\mathrm{an}}$ will be still denoted by $Y$.

\subsection{Tropically continuous map $F$}
\label{ss_F}

Recall the function $\psi$ in (\ref{psi_123_together_eq}).
Inspired by the construction in \cite{Yuan_local_SYZ} and after lots of trials, we come up with the following tropically continuous map (\S \ref{ss_NA_general}):
\begin{equation}
	\label{F_eq}
F : Y \to \mathbb R^5 \qquad (x_1,x_2, z, y_1,y_2)\mapsto (F_1, F_2, G,  \val(y_1),\val(y_2) )
\end{equation}
where
\begin{align*}
	F_1&= \min \left\{
	\val(x_1), -\psi(\val(y_1),\val(y_2), \log c_1) +\min\{0, \val(y_1) \} +\min\{0, \val(y_2) \} 
	\right\} \\[0.5em]
	F_2&= \min \left\{ 
	\val(x_2), \ \ \ \psi(\val(y_1),\val(y_2), \log c_2) 
	\right\} \\[0.5em]
	G &=\median \{ \val(z)+\min\{0,\val(y_2)\}, \quad \psi(\val(y_1),\val(y_2), \log c_1), \quad \psi(\val(y_1),\val(y_2), \log c_2) \} 
\end{align*}

\begin{rmk}
The above construction can be regarded as implementing an \textit{ad hoc} strategy by Kontsevich and Soibelman in \cite[\S 8]{KSAffine}.
It turns out that there is a natural embedding from the family Floer mirror $X_0^\vee=T_1\cup T_2\cup T_3/\sim$ in (\ref{T_123_glue}) into the total space $Y$ of $F$ as we will see in the next section.
While we apologize for the lack of motivation, we hope the precision of the computation will be seen as a worthwhile endeavor.
Specifically, based on Conjecture \ref{conjecture_SYZ} and the Lagrangian fibration (\ref{pi_intro_eq}), the goal is to devise a tropically continuous map $f$ from the resolved conifold $Y$ to $\mathbb R^3$ such that the smooth part of $f$ induces an integral affine structure on the smooth locus that corresponds exactly with the integral affine structure induced by $\pi$. This is challenging due to the involvement of singular $f$-fibers.
The strategy involves shifting from identifying a map $f: Y \to \mathbb R^3$ to developing a map $F: Y \to \mathbb R^N$, where $N$ is large enough to "unfold" the singularities. Although this method is currently ad hoc, there is optimism that future research will formalize a more canonical approach, potentially through a version of Hartog’s extension.
\end{rmk}

Although $\val(x_1), \val(x_2)$ can take $+\infty$ and $ \val(z)$ can take $\pm\infty$, the corresponding values of $F_1, F_2, G$ are all necessarily finite after taking the `min' or the `median'.
Next, we describe the image of $F$ in $\mathbb R^5$.
For any $q_1,q_2$, we define
\[
S_{q_1,q_2} :=\{ (u_1,u_2, v )\in\mathbb R^3 \mid (u_1,u_2, v, q_1,q_2) \in F(Y)\}
\]
In other words,
\begin{equation}
	\label{F(Y)_image_eq}
F(Y)= \bigcup_{q_1,q_2\in\mathbb R} S_{q_1,q_2} \times \{(q_1,q_2)\} \quad \subset \mathbb R^5
\end{equation}
Suppose 
\[
p=F(  w)=(u_1,u_2,v,q_1,q_2)\in F(Y)
\]
for some $  w=(x_1,x_2,z,y_1,y_2)\in Y$.
Note that $q_1=\val(y_1)$ and $q_2=\val(y_2)$.
Recall the notations in (\ref{psi_12_eq}):
\begin{align*}
\psi^{(1)}=\psi^{(1)}(q_1,q_2)&=\psi(q_1,q_2,\log c_1)  \\
\psi^{(2)}=\psi^{(2)}(q_1,q_2)&=\psi(q_1,q_2,\log c_2)
\end{align*}
Due to (\ref{monotone_function_eq}), we also recall that $\psi^{(2)}<\psi^{(1)}$ for all $q_1,q_2$.
Recall the notation in (\ref{m_eq}): 
\[
m:=m(q_1,q_2)=\min\{0,q_1\}+\min\{0,q_2\} 
\]
Recall the broken line $r_{q_1,q_2}$ in (\ref{r_eq}):
\[
r_{q_1,q_2}: \mathbb R\to \mathbb R^3 \qquad t\mapsto 
\Big(
\min\{-t, -\psi^{(1)} \}   + m ,\quad
\min\{t, \psi^{(2)} \}  ,\quad
\median\{t , \psi^{(1)} ,\psi^{(2)} \}
\Big)
\]


Next, we aim to compare $S_{q_1,q_2}$ with the image $r_{q_1,q_2}(\mathbb R)$.
To do so, we consider the following cases of $q_1\equiv \val(y_1)$ and $q_2\equiv \val(y_2)$:

\subsubsection{Case 1: $q_1\neq 0$ and $q_2\neq 0$ . }

Notice that 
\begin{align*}
	\val(x_1)+\val(z) &=\val(1+y_1)= \min\{0, q_1\} \\
	\val(x_2)-\val(z)  &=\val(1+y_2)=\min\{0,q_2\}
\end{align*} 
$\val(x_1), \val(x_2), \val(z)$ are all finite real numbers, so $x_1,x_2,z\in\Lambda^*$.
	Eliminating $\val(x_1)$ and $\val(z)$ yields that
	\[
	(u_1,u_2,v)= 
	\big(
	\min\{
	-\val(x_2), -\psi^{(1)}
	\}
	+
	m
	\, \, , \, \,
	\min\{
	\val(x_2), \psi^{(2)}\}
	\, \,  , \, \,
	\median \{\val(x_2), \psi^{(1)}, \psi^{(2)} \}
	\big)
	\]
There is no constraint on $\val(x_2)$, and we may think of the variable $t:=\val(x_2)$.
So, whenever $q_1\neq 0 \neq q_2$, the $S_{q_1,q_2}$ agrees precisely with $r_{q_1,q_2}(\mathbb R)$.
Additionally, we can check that $p=(u_1,u_2,v,q_1,q_2)$ is always an $F$-smooth point (see \S \ref{ss_NA_general}).
	In reality, observe that $r_{q_1,q_2}(t)=(u_1,u_2,v)$.
	We take $0<\epsilon \ll 1$ and a small neighborhood $V'$ of $(q_1,q_2)$ such that $q'_1\neq 0\neq q'_2$ for any $(q'_1,q'_2)\in V'$.
	Then, there is a neighborhood $U$ of $p$ in $F(Y)$ that is homeomorphic to $V:=(t-\epsilon, t+\epsilon)\times V'$ by taking $r_{q_1',q_2'}(t' )$ for all $(q_1',q_2')$ in $V'$ and $t' \in(t-\epsilon,t+\epsilon)$.
	Under this identification $U\cong V$, we can see that $F^{-1}(U)$ is isomorphic to $\trop^{-1}(V)$ by forgetting $x_1$ and $z$, namely, by $(x_1,x_2, z, y_1,y_2)\mapsto (x_2, y_1,y_2)$.

\subsubsection{Case 2: $q_1=0$ and $q_2\neq 0$ . }
Notice that
\begin{align*}
	\val(x_1)+\val(z) &=\val(1+y_1)\ge \min\{0, q_1\} =0 \\
	\val(x_2)-\val(z)  &=\val(1+y_2)=\min\{0,q_2\}
\end{align*} 
Then, $\val(x_2)<+\infty$ and $\val(z)>-\infty$, i.e. $x_2\in \Lambda^*$ and $z\in \Lambda$.
Note that $\val(z)+\min\{0,q_2\}=\val(x_2)$.

\begin{enumerate}[label=(2\alph*)]
	\item 	If $\val(x_2)<\psi^{(1)}$, then $\val(x_1)>-\psi^{(1)}+m$ and
    \[
	(u_1,u_2,v)= (-\psi^{(1)} +m, \min\{\val(x_2), \psi^{(2)}\} , \median \{\val(x_2), \psi^{(1)} ,\psi^{(2)} \} )
	\]
	As above, we may regard $t:=\val(x_2)$ as a variable such that $-\infty <t<\psi^{(1)}$. Then, we note that $r_{0,q_2}(t)=(u_1,u_2,v)$ by (\ref{r_eq}), and we obtain
	\[
	r_{0,q_2} (-\infty, \psi^{(1)}) \subset S_{0,q_2}
	\]
	Besides, we can check that $p=(u_1,u_2,v,0,q_2)$ is $F$-smooth. In reality, we take $0<\epsilon\ll 1$ and a small neighborhood $V'$ of $(0,q_2)$ such that for any $t' \in (t-\epsilon,t+\epsilon)$ and $(q'_1,q'_2)\in V'$, we have $t' < \psi^{(1)}(q'_1,q'_2)$ and $q'_2\neq 0$.
	Then, there is a neighborhood $U$ of $p$ in $F(Y)$ that is homeomorphic to $V:=(t-\epsilon,t+\epsilon)\times V'$ by taking the various $r_{q'_1,q'_2}(t' )$. Under this identification $U\cong V$, we can see that $F^{-1}(U)$ is isomorphic to $\trop^{-1}(V)$ by forgetting $x_1$ and $z$.

	\item If $\val(x_1)<-\psi^{(1)}+m$, then $\psi^{(1)}<\val(x_2)<+\infty$ and
	\[
	(u_1,u_2,v) = ( \val(x_1) ,  \psi^{(2)}  ,    \psi^{(1)}) 
	\]
	Regard $s:= \val(x_1)$ as a variable such that $s<-\psi^{(1)}+m$.
Then, $\psi^{(1)}<m-s$ and $r_{0,q_2}(m-s)= (u_1,u_2,v)$ by (\ref{r_cases_formula_eq}).
Therefore,
	\[
	r_{0,q_2}(\psi^{(1)},+\infty) \subset S_{0,q_2}
	\]
	On the other hand, we can similarly check $p$ is $F$-smooth.
	In reality, we take $0<\epsilon\ll 1$ and a small neighborhood $V'$ of $(0,q_2)$ such that $q'_2\neq 0$ and $s' <-\psi^{(1)}(q'_1,q'_2) +m(q'_1,q'_2)$ for any $s'  \in(s-\epsilon,s+\epsilon)$ and $(q'_1,q'_2)\in V'$.
	Then, there is a neighborhood $U$ of $p$ in $F(Y)$ that is homeomorphic to $V:=(s-\epsilon,s+\epsilon)\times V'$
	by
	identifying $(s',q'_1,q'_2)$ with $r_{q'_1,q'_2}(m(q'_1,q'_2)-s')\equiv (s', \psi^{(2)}(q'_1,q'_2), \psi^{(1)}(q'_1,q'_2))$.
	Under this identification $U\cong V$, we can show that $F^{-1}(U)$ is isomorphic to $\trop^{-1}(V)$ by forgetting $x_2$ and $z$.

	\item If both $\val(x_1)\ge -\psi^{(1)}+m$ and $\val(x_2)\ge \psi^{(1)}$, then
	\[
	(u_1,u_2,v)= (-\psi^{(1)} +m, \psi^{(2)} ,  \psi^{(1)}  ) 
	\]
	coincides with the corner point $A'(0,q_2)$ in (\ref{corner_point_eq}). One can also check that $p$ is not $F$-smooth. (For instance, one may argue that a neighborhood of $p$ first contains a point $p'$ in the case (2a) and a point $p''$ in the case (2b); then, one may use the fact that the set of smooth points must admit an integral affine structure \cite[\S 4]{KSAffine}.)

\end{enumerate}

According to the above three bullets, we conclude that $S_{0,q_2}$ agrees with $r_{0,q_2}(\mathbb R)$ as well, while the set of $F$-smooth points in $S_{0,q_2}$ is given by $r_{0,q_2} ( (-\infty, \psi^{(1)})\cup (\psi^{(1)},+\infty))$.

\subsubsection{Case 3: $q_1\neq 0$ and $q_2=0$ . }
It is very similar to Case 2 with only slight change.
Notice that
\begin{align*}
	\val(x_1)+\val(z) &=\val(1+y_1)= \min\{0, q_1\}    \\
	\val(x_2)-\val(z)  &=\val(1+y_2)\ge \min\{0,q_2\} =0
\end{align*} 
It follows that $\val(z)<+\infty$ and $\val(x_1)<+\infty$, namely, $x_1\in \Lambda^*$ and $z\in \mathbb P_\Lambda\setminus \{0\}=\Lambda^*\cup\{\infty\}$. Observe that $\val(z)+\min\{0,q_2\}=-\val(x_1)+m$.

\begin{enumerate}[label=(3\alph*)]
	\item  If $\val(x_2)<\psi^{(2)}$, then $\val(x_1)>-\psi^{(2)} +m >  -\psi^{(1)} + m$ and
	\[
	(u_1,u_2,v) = (-\psi^{(1)} +m, \val(x_2), \psi^{(2)} )
	\] 
	Regard $t:=\val(x_2)$ as a variable with $t<\psi^{(2)}$, and we see that
	\[
	r_{q_1,0}(-\infty, \psi^{(2)} ) \subset S_{q_1,0}
	\]
	Besides, we can check that $p$ is $F$-smooth. In reality, note that $r_{q_1,0}(t)=(u_1,u_2,v)$. We take $0<\epsilon \ll 1$ and a small neighborhood $V'$ of $(q_1,0)$ such that for any $t'\in (t-\epsilon, t+\epsilon)$ and $(q'_1,q'_2)\in V'$, we have $t'<\psi^{(2)}(q'_1,q'_2)$ and $q'_1\neq 0$. Then, there is a neighborhood $U$ of $p$ in $F(Y)$ that is homeomorphic to
	$V:=(t-\epsilon,t+\epsilon)\times V'$ by taking the various $r_{q'_1,q'_2}(t')$.
	Under this identification, we know that $F^{-1}(U)$ is isomorphic to $\trop^{-1}(V)$ by forgetting $x_1$ and $z$.

	\item If $\val(x_1)< -\psi^{(2)} +m$, then $\val(x_2)>\psi^{(2)}$ and
	\[
	(u_1,u_2,v)
	=
	(
	\min\{\val(x_1) , -\psi^{(1)}+m\} , \,  \psi^{(2)}  \  ,  \, \median \{m-\val(x_1), \psi^{(1)}, \psi^{(2)} \}
	)
	\]
	Regard $s:=\val(x_1)$ as a variable such that $s<-\psi^{(2)}+m$. Then, $\psi^{(2)}<m-s$ and $r_{q_1,0}(m-s)=(u_1,u_2,v)$ (cf. (\ref{r_eq})).
	Therefore,
	\[
	r_{q_1,0}(\psi^{(2)} ,+\infty)\subset S_{q_1,0}
	\]
	We can similarly check that $p$ is $F$-smooth.

	\item If both $\val(x_1)\ge -\psi^{(2)}+m$ and $\val(x_2)\ge \psi^{(2)}$, then
	\[
	(u_1,u_2,v)= ( -\psi^{(1)}+m, \psi^{(2)}, \psi^{(2)})
	\]
	coincides with the corner point $A(q_1,0)$ in (\ref{corner_point_eq}). One can also check that $p$ is not $F$-smooth.
\end{enumerate}

Due to the above three bullets, we deduce that $S_{q_1,0}$ agrees with $r_{q_1,0}(\mathbb R)$, and the set of $F$-smooth points in $S_{q_1,0}$ is given by $r_{q_1,0}((-\infty, \psi^{(2)})\cup (\psi^{(2)}, +\infty))$.

\subsubsection{Case 4: $q_1=q_2=0$ . }

This is the most special case. Note that $m=0$ and $-\val(x_1)\le \val(z)\le \val(x_2)$.

\begin{enumerate}[label=(4\alph*)]
	\item If $\val(x_2)<\psi^{(2)}$, then $\val(z) <\psi^{(2)}$ and $\val(x_1) > -\psi^{(2)}>-\psi^{(1)}$. Hence,
	$
	(u_1,u_2,v)=( -\psi^{(1)} +m ,\val(x_2), \psi^{(2)})
	$, and one can imitate (3a) to show that
	\[
	r_{0,0}(-\infty, \psi^{(2)})\subset S_{0,0}
	\]
	consisting of $F$-smooth points.

	\item If $\val(x_1)<-\psi^{(1)}$, then $\val(x_2)\ge \val(z) >\psi^{(1)}>\psi^{(2)} $ and $(u_1,u_2,v, q_1,q_2)=(\val(x_1), \psi^{(2)} , \psi^{(1)})$, and one can imitate (2b) to show that
	\[
	r_{0,0}(\psi^{(1)},+\infty) \subset S_{0,0}
	\]
	consisting of $F$-smooth points.

	\item If $\psi^{(2)}<\val(z)<\psi^{(1)}$, then $\val(x_1)> -\psi^{(1)}$ and $\val(x_2)>\psi^{(2)}$. Hence,
	\[
	(u_1,u_2,v)=(-\psi^{(1)}+m, \psi^{(2)}, \val(z) )
	\]
	By viewing $\val(z)$ as a variable, one can similarly see that
	\[
	r_{0,0}( \psi^{(2)}, \psi^{(1)}) \subset S_{0,0}
	\]
	It consists of $F$-smooth points as well by imitating the arguments in (2a) and (3b).

	\item If $\val(x_2)\ge \psi^{(2)}$, $\val(x_1)\ge -\psi^{(1)}$, and $\val(z)\ge \psi^{(1)}$, then $(u_1,u_2,v)=(-\psi^{(1)}+m,\psi^{(2)}, \psi^{(1)})$.

	\item If $\val(x_2)\ge \psi^{(2)}$, $\val(x_1)\ge -\psi^{(1)}$, and $\val(z)\le  \psi^{(2)}$, then $(u_1,u_2,v)=(-\psi^{(1)}+m,\psi^{(2)}, \psi^{(2)})$.
\end{enumerate}

\subsubsection{Conclusion . }
Putting the Case 1, 2, 3, 4 in the above together and recalling the embedding map $j$ in (\ref{j_eq}) and (\ref{j(Delta)_eq}), we have proved the following theorem:

\begin{thm}
	\label{F_affinoid_thm}
	The $F$ restricts to an affinoid torus fibration over $\mathfrak B_0=F(Y) \setminus \hat\Delta$, where the singular locus is
	$
	\hat\Delta=\{
	(A'(0,q_2), 0, q_2) \mid q_2 \in\mathbb R\}
	\cup
	\{
	(A(q_1,0),q_1,0) \mid q_1\in\mathbb R
	\}
	$.
	Moreover, $j(\Delta)=\hat\Delta$ and
	\[
	j(B)= \{ (u_1,u_2,v, q_1,q_2) \in F(Y) \mid u_2 > 0\}
	\]
	In other words, if we set 
	$
	\mathscr Y:=  \{ |x_2|<1 \} \equiv \{\val(x_2)>0\}
	$
	in $Y$,
	then 
	\[
	j(B)=F(\mathscr Y)
	\]
\end{thm}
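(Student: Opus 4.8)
The plan is to assemble the statement from the case-by-case analysis of \S\ref{ss_F} together with the definition of $j$ in \S\ref{ss_j}, filling in the two points that the case analysis leaves implicit: that the boundary points are genuinely singular, and the comparison of $j(B)$ with $F(Y)$ and with $F(\mathscr Y)$. There are four things to do: (a) show $F$ restricts to an affinoid torus fibration over $\mathfrak B_0=F(Y)\setminus\hat\Delta$; (b) show every point of $\hat\Delta$ is $F$-singular, so that $\hat\Delta$ is \emph{exactly} the singular locus; (c) note $j(\Delta)=\hat\Delta$; (d) identify $j(B)=\{(u_1,u_2,v,q_1,q_2)\in F(Y)\mid u_2>0\}=F(\mathscr Y)$.

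For (a) I would proceed as follows. A point $p=F(w)=(u_1,u_2,v,q_1,q_2)$ lies, according to which of $q_1=\val(y_1)$ and $q_2=\val(y_2)$ vanish, in one of Cases 1--4, and inside these the sub-cases other than (2c), (3c), (4d), (4e) are exactly the $F$-smooth ones: for each such sub-case \S\ref{ss_F} already exhibits an affinoid tropical chart, a neighborhood $U\ni p$ homeomorphic via a slice of the broken line $r_{q_1',q_2'}$ (parameterized by one of $\val(x_1),\val(x_2),\val(z)$) to an open box in $\mathbb R^3$, with $F^{-1}(U)\cong\trop^{-1}(V)$ obtained by forgetting the other two of $x_1,x_2,z$. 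What remains is bookkeeping: $F(Y)$ is a topological $3$-manifold, since it is covered by the charts $(t,q_1',q_2')\mapsto(r_{q_1',q_2'}(t),q_1',q_2')$, each $S_{q_1,q_2}=r_{q_1,q_2}(\mathbb R)$ being a topological line and the base coordinates $q_1',q_2'$ being recorded in the image (cf.\ (\ref{F(Y)_image_eq})); hence $\mathfrak B_0$ is an open $3$-manifold. And $F$ is proper over $\mathfrak B_0$: bounding $F_1,F_2,\val(y_1),\val(y_2)$ confines $\val(x_1),\val(x_2)$ (and $y_1,y_2$) to compact ranges while $z$ already lives in the compact $\mathbb P_\Lambda$, so preimages of compacta are compact. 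This gives (a).

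The main obstacle is (b). The case analysis computes $F(Y)$ at $A'(0,q_2)$ (sub-case (2c)), $A(q_1,0)$ (sub-case (3c)), $A'(0,0)$ (sub-case (4d)), $A(0,0)$ (sub-case (4e)), but does not by itself preclude their smoothness. I would argue by the mechanism hinted at after (2c). If such a $p$ were $F$-smooth, a neighborhood $U\ni p$ would carry the flat integral affine structure of $\trop^{-1}(V)\to V$, so the affinoid charts already built on the smooth chambers touching $p$ would be restrictions of one integral affine chart on $U$, with trivial affine monodromy around the part of $\hat\Delta$ through $p$. But a punctured neighborhood of $p$ meets two distinct such chambers: for $p=A'(0,q_2)$ it meets chamber (2a), whose chart forgets $x_1,z$ and uses the coordinate $\val(x_2)$, and chamber (2b), whose chart forgets $x_2,z$ and uses $\val(x_1)$; on their common part (Case 1, on the far linear piece of the broken line) these are related by $\val(x_1)=m(q_1',q_2')-\val(x_2)$ with $m(q_1',q_2')=\min\{0,q_1'\}+\min\{0,q_2'\}$. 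Tracking the two charts around a small loop in $\mathfrak B_0$ linking $\hat\Delta$ near $p$, the composite transition is the integral shear produced by the kink of $m$ across $\{q_1'=0\}$ (the identity on the $q_1'>0$ side, a $q_1'$-shear on the $q_1'<0$ side), hence nontrivial; this contradicts triviality of the monodromy on $U$ (cf.\ \cite[\S4]{KSAffine}), so $p$ is singular. Sub-cases (3c), (4d), (4e) are identical after permuting $q_1,q_2$ (resp.\ $x_1,x_2$, and in Case 4 also $z$). Together with (a), the singular locus is exactly $\hat\Delta$.

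For (c), $j(\Delta)=\hat\Delta$ is immediate from (\ref{j(Delta)_eq}) and $\Delta=\Delta_1\cup\Delta_2$. For (d): fix $(q_1,q_2)$; by (\ref{monotone_function_eq}) the map $q_3\mapsto\psi(q_1,q_2,q_3)$ is a bijection from $\mathbb R$ onto $(0,+\infty)$, and comparing (\ref{j_eq})--(\ref{theta_eq}) with (\ref{r_eq}) shows the $(q_1,q_2)$-slice of $j(B)$ equals $r_{q_1,q_2}((0,+\infty))$, whereas that of $F(Y)$ is $S_{q_1,q_2}=r_{q_1,q_2}(\mathbb R)$ by Cases 1--4; since the second coordinate of $r_{q_1,q_2}(t)$ is $\min\{t,\psi^{(2)}\}$ and $\psi^{(2)}>0$, it is positive exactly for $t>0$, and injectivity of $r_{q_1,q_2}$ gives $j(B)=\{u_2>0\}\cap F(Y)$. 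Finally $F(\mathscr Y)\subseteq\{u_2>0\}$ since $F_2=\min\{\val(x_2),\psi^{(2)}\}>0$ when $\val(x_2)>0$; conversely any $p=r_{q_1,q_2}(t)$ with $t>0$ admits a preimage with $\val(x_2)>0$, because in Case 1 one may take $\val(x_2)=t$, and in the wall cases $\val(x_2)$ enters the relevant sub-case only through a one-sided lower bound (by $\psi^{(1)}$ or $\psi^{(2)}$) and may thus be chosen positive while still mapping to $p$. Hence $j(B)=F(\mathscr Y)$, completing the plan.
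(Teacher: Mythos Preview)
Your proposal is correct and follows essentially the same approach as the paper: the paper's proof \emph{is} the case analysis of \S\ref{ss_F}, concluding with a single sentence ``Putting the Case 1, 2, 3, 4 in the above together and recalling the embedding map $j$ \ldots'', and you assemble precisely those cases. Your treatment is in fact more thorough than the paper's in two places: for (b) the paper only hints at the singularity argument (``one may use the fact that the set of smooth points must admit an integral affine structure''), whereas you carry out the monodromy/shear computation explicitly; and for (d) the paper leaves the identification $j(B)=\{u_2>0\}\cap F(Y)=F(\mathscr Y)$ entirely implicit, while you supply the slicewise comparison via $r_{q_1,q_2}((0,+\infty))$ versus $r_{q_1,q_2}(\mathbb R)$.
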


By Theorem \ref{F_affinoid_thm}, we can define a tropically continuous map as follows:
\begin{equation}
	\label{f_eq}
	f=j^{-1}\circ F : \mathscr Y\to B
\end{equation}
Moreover, since the singular locus $\hat\Delta$ of $F$ is identified with $j(\Delta)$, the singular locus of $f$ is given by $\Delta$ in $B=\mathbb R^3$. In particular, $f_0=f|_{B_0}$ is an affinoid torus fibration.

\begin{rmk}
	The role of the homeomorphism $j$ is actually quite minor, as it is simply designed to match the bases of $\pi$ and $F$. It is entirely fine to work with the Lagrangian fibration $j \circ \pi$ (instead of $\pi$) and the tropically continuous fibration $F$ (instead of $f=j^{-1} \circ F$).
\end{rmk}

We remark that Kontsevich and Soibelman first discovered such kind of construction in \cite{KSAffine} for the focus-focus singularities in dimension 2.
The above $f$ in the current paper closely follows their idea, and we further realize a new type of singular locus in dimension 3, which is not included in the class of examples in our previous work \cite{Yuan_local_SYZ}.

\section{Proof of main theorem}
\label{s_T_duality_matching}

\subsection{Analytic embedding $g$}
\label{ss_g}

The tropically continuous map $F: \mathscr Y\to B$ defined in the previous section is indeed ad hoc, and we sincerely apologize for the lack of motivation. However, it turns out that its total space $\mathscr Y$ largely agrees with the family Floer mirror space $X_0 = T_1\cup T_2\cup T_3/\sim$ in (\ref{T_123_glue}) via an analytic embedding $g$ defined below.
Recall that $T_k=\trop^{-1}(\chi_k(U_k))$ is defined as in (\ref{T_k_eq}) and $U_1, U_2, U_3$ gives an atlas of integral affine structure on the base manifold $B_0$ of the Lagrangian fibration.
Imitating the construction in \cite{Yuan_local_SYZ}, we define the analytic maps
\begin{equation}
	\label{g_k_eq}
	g_k: T_k\to  \Lambda^2\times\mathbb P_\Lambda \times (\Lambda^*)^2
\end{equation}
for $k=1,2,3$ by the following formulas:
\[
\begin{matrix}
	g_1(y_1,y_2,y_3)= &	\Big( 
	&  \displaystyle \frac{1}{y_3} ,  
	& y_3(1+y_1)(1+y_2) ,
	& y_3(1+y_1) , 
	& y_1,  & y_2 & \Big) \\[1.5em]
	g_2(y_1,y_2,y_3)= 
	&\Big( 
	& \displaystyle \frac{1+y_1}{y_3} ,
	& y_3(1+y_2) ,
	& y_3, 
	& y_1 , & y_2 & \Big) \\[1.5em]
	g_3(y_1,y_2,y_3)= &\Big( 
	&   \displaystyle \frac{(1+y_1)(1+y_2)}{y_3} , 
	& 	y_3 ,
	& \displaystyle \frac{y_3}{1+y_2} , 
	& y_1 , & y_2 & \Big) 
	\displaystyle 
\end{matrix}
\]
We check that
\[
g_2\circ \Phi_{12}=g_1, \qquad g_3\circ \Phi_{23}=g_2
\]
Hence, with regard to the identification (\ref{T_123_glue}), this gives rise to an analytic embedding map
\[
g: X_0^\vee \to \Lambda^2\times \mathbb P_\Lambda \times (\Lambda^*)^2
\]

\begin{rmk}
	The image of $g$ is contained in $Y$.
	The three \textit{analytic} open subsets $g_1(T_1), g_2(T_2), g_3(T_3)$ are strictly contained in the three \textit{Zariski} open subsets $\mathcal T_1=\{x_1\neq 0\}, \mathcal T_2=\{0\neq z\neq \infty\}, T_3=\{x_2\neq 0\}$ (cf. \S \ref{sss_speculation_Chan}).
	Hence, one can formally extend the domains of $g_k$'s to be $(\Lambda^*)^3$ and extend the domains of $\Phi_{12}, \Phi_{23}$ accordingly.
\end{rmk}

\subsection{A commutative diagram}
Recall that $f=j^{-1}\circ F$ as in (\ref{f_eq}). To prove Theorem \ref{Main_theorem_SYZ_intro}, it remains to show the following result. 
Note that we already identified an atlas of integral affine structure on $B_0$ with three charts (cf. (\ref{tau_123_eq}) and (\ref{T_k_eq})). The atlas on $B_0$ natrually induces an atlas on $j(B_0)\subseteq F(\mathscr Y)$ via the embedding $j$.
Note also that by definition, we already know $g$ is an analytic morphism due to its explicit formulas. Thus, it suffices to check the commutative diagram in the set-theoretic level. 

\begin{thm}
	\label{key_fibration_preserving}
	$F\circ g=j\circ \pi_0^\vee$. Namely, the following diagram commutes
	\[
	\xymatrix{
	X_0^\vee \ar[rr]^g  \ar[d]^{\pi_0^\vee} & & \mathscr Y \ar[d]^F \\
	B_0\ar[rr]^j & & \mathbb R^5
}
	\]
\end{thm}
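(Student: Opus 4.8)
The goal is a pointwise identity between two maps $X_0^\vee \to \mathbb R^5$, so the natural strategy is to verify it separately on each of the three affinoid charts $T_1, T_2, T_3$ from \eqref{T_123_glue}, using the explicit coordinates $\tau_k$ of \eqref{tau_123_eq} and the explicit formulas for $g_k$ in \S\ref{ss_g}. On the chart $T_k$, a point is recorded by $\mathbf y \in H^1(L_q; U_\Lambda)$ over $q = (q_1, q_2, q_3) \in U_k$, and $\tau_k(\mathbf y) = (y_1, y_2, y_3)$ with $\val(y_1) = q_1$, $\val(y_2) = q_2$, $\val(y_3) = \psi_k(q)$. Since $\pi_0^\vee(\mathbf y) = q$, the right-hand side $j \circ \pi_0^\vee$ is just $j(q_1, q_2, q_3)$, computed via \eqref{j_eq}--\eqref{theta_eq} using the chart-dependent expression \eqref{psi_123_together_eq} for $\psi$. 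The left-hand side is $F(g_k(y_1, y_2, y_3))$, and because every component of $F$ in \eqref{F_eq} depends only on the valuations $\val(x_1), \val(x_2), \val(z), \val(y_1), \val(y_2)$ of the image point, the whole computation reduces to bookkeeping of valuations. So the first step is to tabulate, for each $k$, the valuations of the five coordinates of $g_k(y_1, y_2, y_3)$ in terms of $q_1, q_2$ and $\psi_k(q)$, using $\val(1 + y_i) = \min\{0, q_i\}$ when $q_i \neq 0$ (and being careful about the loci $q_i = 0$, i.e. $|y_i| = 1$, where $\val(1+y_i)$ can be any value $\geq 0$; this is precisely where the singular fibers live, and on $B_0$ we only need $q \in B_0$ so $q_3 = \log c_1$ or $\log c_2$ is excluded and the relevant degenerations do not occur in the interior of the charts, though they occur on the walls).

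\textbf{Key steps in order.} (1) For $k = 3$: on $U_3 = B_3$ we have $\psi = \psi_3$, and $g_3$ gives $\val(x_1) = \min\{0,q_1\} + \min\{0,q_2\} - \psi_3(q)$, $\val(x_2) = \psi_3(q)$, $\val(z) = \psi_3(q) - \min\{0,q_2\}$, $\val(y_i) = q_i$; substitute into $F_1, F_2, G$ and compare with $\theta_1, \theta_2, \vartheta$ evaluated via the $U_3$-branch $\psi = \psi_3$ of \eqref{psi_123_together_eq}, using $\psi^{(1)} > \psi^{(2)} > 0$ and the monotonicity \eqref{monotone_function_eq} to resolve each $\min$ and $\median$. (2) For $k = 2$: on $U_2$, $\psi = \psi_2 + \min\{0, q_2\}$; redo the valuation table for $g_2$ — here $\val(x_1) = \min\{0,q_1\} - \psi_2(q)$, $\val(x_2) = \psi_2(q) + \min\{0,q_2\}$, $\val(z) = \psi_2(q)$ — and again match the three nontrivial components. (3) For $k = 1$: on $U_1$, $\psi = \psi_1 + \min\{0,q_1\} + \min\{0,q_2\}$; the valuation table for $g_1$ gives $\val(x_1) = -\psi_1(q)$, $\val(x_2) = \psi_1(q) + \min\{0,q_1\} + \min\{0,q_2\}$, $\val(z) = \psi_1(q) + \min\{0,q_1\}$; match. (4) Check that the three verifications are consistent on overlaps: this is automatic because $g_k \circ \Phi_{k-1,k} = g_{k-1}$ (proved in \S\ref{ss_g}), $\tau_k$ and $\Phi$ are compatible with the gluing \eqref{T_123_glue}, and $F, j$ are globally defined; but one should still double-check that the branch changes in \eqref{psi_123_together_eq} across $\mathscr N_{i\pm}$ are exactly compensated by the $\min\{0, q_i\}$ terms appearing in the $g_k$ valuation tables — i.e. the relations \eqref{psi_32_eq}, \eqref{psi_21_eq} are what make the three chart computations glue into one statement.

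\textbf{Expected main obstacle.} The routine part is resolving the nested $\min$/$\median$ expressions; the genuinely delicate point is that $\psi$ itself is only piecewise-defined (\eqref{psi_123_together_eq}) while $\psi^{(1)}, \psi^{(2)}$ in \eqref{theta_eq} are evaluated \emph{at} $q_3 = \log c_1, \log c_2$ regardless of which chamber $q$ lies in, so in a given chart $T_k$ one must simultaneously track $\psi_k(q)$, $\psi^{(1)}(q_1, q_2)$, $\psi^{(2)}(q_1, q_2)$ and the inequalities among them (e.g. whether $\psi_k(q) \lessgtr \psi^{(1)}$ depends on whether $q \in B_1, B_2$, or $B_3$). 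Organizing the three chart computations so that the case divisions of $F$ in \S\ref{ss_F} (Cases 1--4, subcases (2a)--(4e)) line up with the chambers $B_1, B_2, B_3$ and the walls $H_{i\pm}$, and confirming that the $F$-image description (Theorem \ref{F_affinoid_thm}, $j(B) = F(\mathscr Y)$) is exactly reproduced by the union of the $g_k(T_k)$, is the part that demands care rather than ingenuity. Once the identity $F \circ g = j \circ \pi_0^\vee$ holds, Theorem \ref{Main_theorem_SYZ_intro} follows: $g$ is the required embedding of $\mathscr Y$ into $Y$, the commuting square shows $f_0 = j^{-1} \circ F \circ g = \pi_0^\vee$ on $B_0$ so condition (iii) of Conjecture \ref{conjecture_SYZ} holds, and conditions (i), (ii) are supplied by Theorem \ref{F_affinoid_thm} together with \S\ref{ss_action_coordinates}.
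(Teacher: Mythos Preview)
Your proposal is correct and follows the same chart-by-chart verification as the paper: on each $T_k$, tabulate the valuations of the coordinates of $g_k(y_1,y_2,y_3)$ and resolve the $\min/\median$ in $F_1,F_2,G$ against $\theta_1,\theta_2,\vartheta$ using the chamber inequalities supplied by \eqref{monotone_function_eq}.

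One correction to your parenthetical about the loci $q_i=0$: these are \emph{not} avoided by restricting to $B_0$ --- they meet the interior of every chamber $B_k$ (e.g.\ $q_1=0$, $q_3<\log c_2$ lies in $B_3$), so the indeterminacy $\val(1+y_i)\ge 0$ genuinely occurs in each chart computation. What makes the argument go through is that the chamber membership of $q$ forces a strict inequality between $\psi(q)$ and the relevant $\psi^{(1)}$ or $\psi^{(2)}$, which pins down which branch of each $\min$ or $\median$ is attained; the uncontrolled $\val(1+y_i)$ always sits in the non-attained branch and drops out. The paper carries this out by an explicit sub-case (``if $q_i=0$, then $q\in U_k$ implies \ldots'') inside each of the nine component checks; your execution will need the same splits rather than an appeal to $q\in B_0$.
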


\begin{proof}
Fix $\mathbf y$ in $X_0^\vee$. Write $q=(q_1,q_2,q_3)=\pi_0^\vee(\mathbf y)$ and 
$
g(\mathbf y)=(x_1,x_2,z,y_1,y_2)
$.

\begin{enumerate}
\item  If $q\in U_1$, then $\mathbf y$ is identified with a point $y=(y_1,y_2,y_3)$ in $T_1$ such that $g(\mathbf y)=g_1(y)$. Moreover, $\val(y_1)=q_1$, $\val(y_2)=q_2$, and $\val(y_3)=\psi_1(q)=\psi(q)-\min\{0,q_1\}-\min\{0,q_2\}$. 
Then,
		\begin{enumerate}[label=\theenumi\alph*)]
		\item By direct computation,\[
		F_1(g(\mathbf y))=\min\{ -\psi_1(q),  -\psi(q_1,q_2, \log c_1) +\min\{0,q_1\}+\min\{0,q_2\}\} =\theta_1(q)
		\]
		\item 
		We aim to show that
		\[
		F_2(g(\mathbf y)) =  \min\{ \psi_1(q)+\val(1+y_1)+\val(1+y_2), \psi(q_1,q_2,\log c_2)\}
		\]
		agrees with $\theta_2(q)$ in (\ref{theta_eq}). In fact, $q\in U_1$ implies that $q_3$ is $>$ or $\approx$ $\log c_1$, thus, $q_3> \log c_2$. By (\ref{monotone_function_eq}), 
		$\theta_2(q)=\psi(q_1,q_2,\log c_2)$ and the minimum in $F_2(g(\mathbf y))$ above takes the second number since
		$\psi_1(q)+\val(1+y_1)+\val(1+y_2) \ge \psi_1(q_1,q_2, \log c_2)+\min\{0,q_1\}+\min\{0,q_2\} =\psi(q_1,q_2, \log c_2)$.
		\item We aim to show that
		\[
		G(g(\mathbf y))= \median \{\psi_1(q)+\val(1+y_1) + \min\{0,q_2\},\quad  \psi(q_1,q_2, \log c_1), \quad \psi(q_1,q_2,\log c_2) \} 
		\]
		agrees with $\vartheta(q)$ in (\ref{theta_eq}). In fact, if $q_1=0$, then $q\in U_1$ implies $q_3>\log c_1>\log c_2$. It follows from (\ref{psi_123_together_eq}, \ref{monotone_function_eq}) that $\vartheta(q)=\psi(q_1,q_2,\log c_1)$ and that the median takes the second number since $\psi_1(q)+\val(1+y_1) +\min\{0,q_2\} \ge \psi(q_1,q_2,\log c_1)$.
		If $q_1\neq 0$, then $\val(1+y_1)=\min\{0,q_1\}$. In either case, what we want holds.
		\end{enumerate}

\item If $q\in U_2$, then $\mathbf y$ is identified with a point $y=(y_1,y_2,y_3)$ in $T_2$ such that $g(\mathbf y)=g_2(y)$. Moreover, $\val(y_1)=q_1$, $\val(y_2)=q_2$, and $\val(y_3)=\psi_2(q)=\psi(q)-\min\{0,q_2\}$. Then,
		\begin{enumerate}[label=\theenumi\alph*)]
			\item  We aim to show
			\[
			F_1(g(\mathbf y)) =\min\{  -\psi_2(q)+\val(1+y_1), -\psi(q_1,q_2,\log c_1)+\min\{0,q_1\}+\min\{0,q_2\} \}
			\]
			agrees with $\theta_1(q)$ in (\ref{theta_eq}).
			In fact, if $q_1=0$, then $q\in U_2$ implies that $q_3<\log c_1$. It follows from (\ref{monotone_function_eq}) that the minimum takes the second number and $-\psi_2(q)+\val(1+y_1) \ge -\psi_2(q_1,q_2,\log c_1) +\min\{0,q_1\}\equiv -\psi(q_1,q_2,\log c_1) +\min\{0,q_1\} +\min\{0,q_2\}=\theta_1(q)$.
			If $q_1\neq 0$, then $\val(1+y_1)=\min\{0,q_1\}$, so what we want still holds.
			\item We aim to show
			\[
			F_2(g(\mathbf y))=\min\{\psi_2(q)+\val(1+y_2) , \psi(q_1,q_2,\log c_2)\}
			\]
			agrees with $\theta_2(q)$ in (\ref{theta_eq}).
			In fact, if $q_2 =0$, then $q\in U_2$ implies that $q_3>\log c_2$. It follows from (\ref{monotone_function_eq}) that the minimum takes the second number and $\psi_2(q)+\val(1+y_2)\ge \psi_2(q_1,q_2,\log c_2)+\min\{0,q_2\}=\psi(q_1,q_2,\log c_2)=\theta_2(q)$.
			If $q_2\neq 0$, then $\val(1+y_2)=\min\{0,q_2\}$, so what we want still holds.
			\item By direct computation,
			\[
			G(g(\mathbf y))=\median\{ \psi_2(q)+\min\{0,q_2\} ,\quad \psi(q_1,q_2,\log c_1), \quad \psi(q_1,q_2,\log c_2) \} =\vartheta(q)
			\]
		\end{enumerate}

\item If $q\in U_3$, then $\mathbf y$ is identified with a point $y=(y_1,y_2,y_3)$ in $T_3$ such that $g(\mathbf y)=g_3(y)$. Moreover, $\val(y_1)=q_1$, $\val(y_2)=q_2$, and $\val(y_3)=\psi_3(q)=\psi(q)$. Then,
		\begin{enumerate}[label=\theenumi\alph*)]
			\item  We aim to show that
			\[
			F_1(g(\mathbf y))=\min\{ -\psi(q)+\val(1+y_1)+\val(1+y_2), \ -\psi(q_1,q_2, \log c_1) +\min\{0,q_1\}+\min\{0,q_2\} \}
			\]
			agrees with $\theta_1(q)$ in (\ref{theta_eq}).
			In fact, if $q_2=0$, then $q\in U_3$ implies that $q_3<\log c_2<\log c_1$. It follows from (\ref{monotone_function_eq}) that the minimum takes the second number, so $F_1(g(\mathbf y))=\theta_1(q)$.
			\item  By direct computation,
			\[
			F_2(g(\mathbf y))=\min\{ \psi(q) , \psi(q_1,q_2,\log c_2) \} =\theta_2(q)
			\]
			\item  We aim to show that
			\[
			G( g(\mathbf y))=\median \{\psi(q)-\val(1+y_2)+\min\{0,q_2\}, \psi(q_1,q_2,\log c_1) ,\psi(q_1,q_2,\log c_2)\}
			\]
			agrees with $\vartheta(q)$ in (\ref{theta_eq}). In fact, if $q_2=0$, then $q\in U_3$ implies that $q_3<\log c_2<\log c_1$.
			It follows from (\ref{monotone_function_eq}) that $\psi(q)-\val(1+y_2)+\min\{0,q_2\}\le \psi(q_1,q_2, \log c_2)$, so the median takes the third number. Hence, $G(g(\mathbf y))=\vartheta(q)$.
		\end{enumerate}
\end{enumerate}
The proof of Theorem \ref{key_fibration_preserving} is now complete. 
\end{proof}

\subsection{Proof of Theorem \ref{Main_theorem_SYZ_intro}}

Notice that $g$ is an analytic embedding map. Recall that by Theorem \ref{F_affinoid_thm}, the $f_0=f|_{B_0}$ is proved to be an affinoid torus fibration.
By Theorem \ref{key_fibration_preserving}, the image $g(X_0^\vee)$ agrees with $f^{-1}(B_0)$.
To wit, $g$ intertwines the affinoid torus fibrations $\pi_0^\vee$ and $f_0$.
Thus, the integral affine structure induced by $f_0$ precisely matches the one induced by $\pi_0^\vee$, and the latter, in turn, corresponds exactly to the integral affine structure induced by $\pi_0$ through its canonical construction as seen in (\ref{T_123_glue}).
This verifies the conditions (ii) and (iii). Finally, by Theorem \ref{F_affinoid_thm} again, the condition (i) holds as well.

\bibliographystyle{abbrv}

\bibliography{mybib_conifold}

\begin{thebibliography}{10}

\bibitem{AboFamilyFaithful}
M.~Abouzaid.
\newblock {The family Floer functor is faithful}.
\newblock {\em Journal of the European Mathematical Society}, 19(7):2139--2217,
  2017.

\bibitem{AuTDual}
D.~Auroux.
\newblock {Mirror symmetry and T-duality in the complement of an anticanonical
  divisor}.
\newblock {\em Journal of G{\"o}kova Geometry Topology}, 1:51--91, 2007.

\bibitem{castronovo2019exotic}
M.~Castronovo.
\newblock {Exotic Lagrangian tori in Grassmannians}.
\newblock {\em arXiv preprint arXiv:1910.10888}, 2019.

\bibitem{Formes_Chambert_2012}
A.~Chambert-Loir and A.~Ducros.
\newblock {Formes diff\'erentielles r\'eelles et courants sur les espaces de
  Berkovich}.
\newblock {\em arXiv preprint arXiv:1204.6277}, 2012.

\bibitem{Chan_Pomerleano_Ueda}
K.~Chan, D.~Pomerleano, and K.~Ueda.
\newblock Lagrangian torus fibrations and homological mirror symmetry for the
  conifold.
\newblock {\em Communications in Mathematical Physics}, 341(1), 2016.

\bibitem{groman_varolgunes_2022closed_string}
Y.~Groman and U.~Varolgunes.
\newblock Closed string mirrors of symplectic cluster manifolds.
\newblock {\em arXiv preprint arXiv:2211.07523}, 2022.

\bibitem{Gross_topo_MS}
M.~Gross.
\newblock Topological mirror symmetry.
\newblock {\em Inventiones mathematicae}, 144(1):75--137, 2001.

\bibitem{GHK_birational}
M.~Gross, P.~Hacking, and S.~Keel.
\newblock Birational geometry of cluster algebras.
\newblock {\em arXiv preprint arXiv:1309.2573}, 2013.

\bibitem{gubler2021forms}
W.~Gubler, P.~Jell, and J.~Rabinoff.
\newblock {Forms on Berkovich spaces based on harmonic tropicalizations}.
\newblock {\em arXiv preprint arXiv:2111.05741}, 2021.

\bibitem{hori2002mirror}
K.~Hori.
\newblock Mirror symmetry and quantum geometry.
\newblock {\em Proceedings of the International Congress of Mathematicians},
  Vol. III (Beijing, 2002):431--443, 2002.

\bibitem{Joyce_Singularity}
D.~Joyce.
\newblock {Singularities of special Lagrangian fibrations and the SYZ
  Conjecture}.
\newblock {\em Communications in Analysis and Geometry}, 11(5):859--907, 2003.

\bibitem{Joyce_book}
D.~D. Joyce.
\newblock {\em Riemannian holonomy groups and calibrated geometry}, volume~12.
\newblock Oxford University Press, 2007.

\bibitem{KonICM}
M.~Kontsevich.
\newblock Homological algebra of mirror symmetry.
\newblock In {\em Proceedings of the international congress of mathematicians},
  pages 120--139. Springer, 1995.

\bibitem{KSAffine}
M.~Kontsevich and Y.~Soibelman.
\newblock Affine structures and non-archimedean analytic spaces.
\newblock In {\em The unity of mathematics}, pages 321--385. Springer, 2006.

\bibitem{NA_nonarchimedean_SYZ}
J.~Nicaise, C.~Xu, and T.~Y. Yu.
\newblock {The non-archimedean SYZ fibration}.
\newblock {\em Compositio Mathematica}, 155(5):953--972, 2019.

\bibitem{pascaleff2019symplectic}
J.~Pascaleff.
\newblock On the symplectic cohomology of log calabi--yau surfaces.
\newblock {\em Geometry \& Topology}, 23(6):2701--2792, 2019.

\bibitem{seidel2001braid}
P.~Seidel and R.~Thomas.
\newblock Braid group actions on derived categories of coherent sheaves.
\newblock {\em Duke Mathematical Journal}, 108(1):37--108, 2001.

\bibitem{SYZ}
A.~Strominger, S.-T. Yau, and E.~Zaslow.
\newblock {Mirror symmetry is T-duality}.
\newblock {\em {Nuclear Physics. B}}, 479(1-2):243--259, 1996.

\bibitem{Yuan_affinoid_coeff}
H.~Yuan.
\newblock {Lagrangian Floer cohomology over affinoid coefficients}.
\newblock {\em in preparation}.

\bibitem{Yuan_I_FamilyFloer}
H.~Yuan.
\newblock {Family Floer program and non-archimedean SYZ mirror construction}.
\newblock {\em arXiv preprint arXiv: 2003.06106}, 2020.

\bibitem{Yuan_e.g._FamilyFloer}
H.~Yuan.
\newblock {Disk counting and wall-crossing phenomenon via family Floer theory}.
\newblock {\em Journal of Fixed Point Theory and Applications}, 24(4):77, 2022.

\bibitem{Yuan_An}
H.~Yuan.
\newblock {Family Floer SYZ conjecture for $A_n$ singularity}.
\newblock {\em arXiv preprint arXiv:2305.13554}, 2023.

\bibitem{Yuan_local_SYZ}
H.~Yuan.
\newblock {Family Floer mirror space for local SYZ singularities}.
\newblock In {\em Forum of Mathematics, Sigma}, volume~12, page e119. Cambridge
  University Press, 2024.

\bibitem{Yuan_c_1}
H.~Yuan.
\newblock Family floer superpotential’s critical values are eigenvalues of
  quantum product by $c_1$.
\newblock {\em Selecta Mathematica}, 31(1):13, 2025.

\end{thebibliography}

\end{document}